\newtheorem{theorem}{Theorem}[section]
\newtheorem{lemma}[theorem]{Lemma}
\newtheorem{corollary}[theorem]{Corollary}
\theoremstyle{definition}
\theoremstyle{remark}
\newtheorem{remark}[theorem]{Remark}
\numberwithin{equation}{section}
\begin{document}

\title [  Upper bounds for numerical radius inequalities  ]{Upper bounds for numerical radius inequalities involving off-diagonal operator matrices }

\author[ M. Bakherad and K. Shebrawi  ]{Mojtaba Bakherad$^1$ and Khalid Shebrawi$^2$}

\address{ $^1$Department of Mathematics, Faculty of Mathematics, University of Sistan and Baluchestan, Zahedan, I.R.Iran.}

\email{mojtaba.bakherad@yahoo.com; bakherad@member.ams.org}

\address{ $^2$Department of Mathematics, Al-Balqa' Applied University, Salt, Jordan.}

\email{khalid@bau.edu.jo; shebrawi@gmail.com}

\subjclass[2010]{Primary 47A12,  Secondary  47A30, 47A63, 47B33}

\keywords{  numerical radius; off-diagonal part;   positive operator; Young inequality; generalized Euclidean operator radius.}
%~~~~~~~~~~~~~~~~~~~~~~~~~~~~~~~~~~~~~~~~~~~~~~~~~~~~~~~~~~~~~~~~~~~~~~~~~~~~~~~~~~~~~~~~~~~~~~~~~~~~~~~~~~~~~~~~~~~~~~~~~~~~~~~~~~~
\begin{abstract}
In this paper, we establish some upper bounds for  numerical radius inequalities including of $2\times 2$ operator
matrices and their off-diagonal parts. Among other inequalities, it is shown that if $T=\left[\begin{array}{cc}
 0&X\\
 Y&0
 \end{array}\right]$, then
 \begin{align*}
 \omega^{r}(T)\leq 2^{r-2}\left\|f^{2r}(|X|)+g^{2r}(|Y^*|)\right\|^\frac{1}{2}\left\|f^{2r}(|Y|)+g^{2r}(|X^*|)\right\|^\frac{1}{2}
   \end{align*}
   and
   \begin{align*}
 \omega^{r}(T)\leq 2^{r-2}\left\|f^{2r}(|X|)+f^{2r}(|Y^*|)\right\|^\frac{1}{2}\left\|g^{2r}(|Y|)+g^{2r}(|X^*|)\right\|^\frac{1}{2},
   \end{align*}
where $X, Y$ are bounded linear operators on a Hilbert space ${\mathscr H}$, $r\geq 1$ and $f$, $g$ are nonnegative  continuous  functions on $[0, \infty)$ satisfying the relation $f(t)g(t)=t\,(t\in[0, \infty))$. Moreover, we present some inequalities involving the generalized Euclidean operator radius of operators $T_{1},\cdots,T_{n}$.
\end{abstract} \maketitle
%~~~~~~~~~~~~~~~~~~~~~~~~~~~~~~~~~~~~~~~~~~~~~~~~~~~~~~~~~~~~~~~~~~~~~~~~~~~~~~~~~~~~~~~~~~~~~~~~~~~~~~~~~~~~~~~~~~~~~~~~~~~~~~~~~~~
\section{Introduction}
 Let ${\mathbb B}(\mathscr H)$ denote the $C^{*}$-algebra of all bounded linear operators on a Hilbert space ${\mathscr H}$. In the case when ${\rm dim}{\mathscr H}=n$, we identify ${\mathbb B}({\mathscr H})$ with the matrix
algebra $\mathbb{M}_n$ of all $n\times n$ matrices with entries in
the complex field. An operator $A\in{\mathbb B}(\mathscr H)$ is said to be contraction, if $A^*A\leq I$.  The numerical radius of $T\in {\mathbb B}({\mathscr H})$ is defined by
$$\omega(T):=\sup\{\left| \langle Tx, x\rangle\right| : x\in {\mathscr H}, \parallel x \parallel=1\}.$$
It is well known that $\omega(\,\cdot\,)$ defines a norm on ${\mathbb B}({\mathscr H})$, which is equivalent to the usual operator norm. In fact, $\frac{1}{2}\| \,\cdot\, \|\leq \omega(\,\cdot\,) \leq\| \,\cdot\, \|$; see \cite{gof}.
 An important inequality for $\omega(A)$ is the power inequality stating that $\omega(A^n)\leq \omega(A)^n\,\,(n=1,2,\cdots)$. For further information about the properties of numerical radius inequalities we refer the reader to \cite{aA, ando, sheikh} and references therein. Let ${\mathscr H_{1}},{\mathscr H_{2}}$ be Hilbert spaces, and consider the direct sum ${\mathscr H}={\mathscr H_{1}}\oplus{\mathscr H_{2}}$. With respect to this decomposition, every operator $T\in {\mathbb B}({\mathscr H})$ has a $2\times 2$ operator matrix representation $T=[T_{ij}]$ with entries  $T_{ij}\in {\mathbb B}({\mathscr H_{j}}, {\mathscr H_{i}})$, the space of all bounded linear operators from ${\mathscr H_{j}}$ to ${\mathscr H_{i}}\,\,(1\leq i,j \leq 2)$. Operator matrices provide a usual tool for studying Hilbert space operators, which have been extensively studied in the literatures.
Let $A\in {\mathbb B}({\mathscr H_{1}}, {\mathscr H_{1}})$, $B\in {\mathbb B}({\mathscr H_{2}}, {\mathscr H_{1}})$, $C\in {\mathbb B}({\mathscr H_{1}}, {\mathscr H_{2}})$ and $D\in {\mathbb B}({\mathscr H_{2}}, {\mathscr H_{2}})$. The operator $\left[\begin{array}{cc} A&0\\ 0&D \end{array}\right]$ is called the diagonal part of $\left[\begin{array}{cc} A&B\\ C&D \end{array}\right]$ and $\left[\begin{array}{cc}  0&B\\ C&0 \end{array}\right]$ is the off-diagonal part.\\
  The classical Young inequality says that if $p, q>1$ such that $\frac{1}{p}+\frac{1}{q}=1$, then $ab\leq \frac{a^{p}}{p}+\frac{b^{q}}{q}$ for positive real numbers $a, b$. In \cite{FUJ}, the authors showed that a refinement of the scalar Young inequality  as follows $\left(a^{\frac{1}{p}}b^{\frac{1}{q}}\right)^{m}+r_{0}^{m}\left(a^{\frac{m}{2}}-b^{\frac{m}{2}}\right)^{2}\leq\left(\frac{a}{p}+\frac{b}{q}\right)^{m},$
where $r_{0}=\min \{ \frac{1}{p}, \frac{1}{q}\}$ and $m=1, 2,\cdots$. In particular, if $p=q=2$, then
\begin{align}\label{12}
\left(a^{\frac{1}{2}}b^{\frac{1}{2}}\right)^{m}+\left(\frac{1}{2}\right)^{m}\left(a^{\frac{m}{2}}-b^{\frac{m}{2}}\right)^{2}\leq 2^{-m}(a+b)^{m}.
\end{align}
It has been shown in \cite{naj}, that if $T\in {\mathbb B}({\mathscr H})$, then
\begin{align}\label{13}
\omega(T)\leq \frac{1}{2} \| |T| + |T^{*}| \|,
\end{align}
where $|T|=(T^{*}T)^{\frac{1}{2}}$ is the absolute value of $T$. Recently \cite{CAL}, the authors extended this inequality for off-diagonal operator matrices of the form  $T=\left[\begin{array}{cc}
 0&X\\
 Y&0
 \end{array}\right]\in {\mathbb B}({\mathscr H_1\oplus\mathscr H_2})$ as follows
 \begin{align}\label{133}
\omega(T)\leq \frac{1}{2} \left\| |X| + |Y^{*}| \right\|^{\frac{1}{2}}\left\| |X^*| + |Y| \right\|^{\frac{1}{2}}.
\end{align}
Let $T_{1}, T_{2},\cdots,T_{n}\in {\mathbb B}({\mathscr H})$.
The functional $\omega_{p}$ of operators $T_{1},\cdots,T_{n}$ for $p\geq 1$ is defined in \cite{FUJ2} as follows
\begin{align*}
\omega_{p}(T_{1},\cdots,T_{n}):= \sup_{\| x \| = 1} \left(\sum_{i=1}^{n} | \left\langle T_{i}x, x\right\rangle |^{p}\right)^{\frac{1}{p}}.
\end{align*}
If $p=2$, then we have the Euclidean operator radius of $T_{1},\cdots,T_{n}$ which was defined in \cite{pop}.
In \cite{sheikh}, the authors showed that an upper bound for the functional $\omega_{p}$
\begin{align*}
 \omega_{p}^p(T_{1},\cdots,T_{n})\leq {1\over2}\left\|\sum_{i=1}^n\left(f^{2p}(|T_i|)+g^{2p}(
 |T_i^*|)\right)\right\|-\inf_{\|x\|=1} \zeta (x),
 \end{align*}
 where $T_i \in {\mathbb B}({\mathscr H})\,\,(i=1,2,\cdots,n)$, $f$, $g$ are nonnegative  continuous  functions on $[0, \infty)$ such that $f(t)g(t)=t\,(t\in [0, \infty))$, $p\geq 1$   and
 {\footnotesize\begin{align*}
 \zeta (x)=\frac{1}{2}\sum_{i=1}^n\left(\left\langle f^{2p}(|T_i|)x,x\right\rangle^{1\over2}-\left\langle g^{2p}(
 |T_i^*|)x,x\right\rangle^{1\over2}\right)^2.
 \end{align*}}

In this paper,  we show some inequalities involving powers of the numerical radius for off-diagonal parts of $2\times 2$ operator matrices. In particular, we extend inequalities \eqref{13} and \eqref{133} for nonnegative  continuous  functions $f$, $g$  on $[0, \infty)$ such that $f(t)g(t)=t\,(t\in [0, \infty))$.
Moreover, we present some inequalities including the generalized Euclidean operator radius $\omega_{p}$.

 %===================================================================================================================================

\section{main results}
\bigskip To prove our first result, we need the following lemmas.
\begin{lemma}\cite{ROD, yam}\label{1}
Let $X\in {\mathbb B}({\mathscr H})$. Then\newline

$(a)\,\,\omega(X)=\underset{\theta \in
%TCIMACRO{\U{211d} }%
%BeginExpansion
\mathbb{R}
%EndExpansion
}{\max }\left\Vert \textrm{Re}\left( e^{i\theta }X\right) \right\Vert =\underset{\theta \in
%TCIMACRO{\U{211d} }%
%BeginExpansion
\mathbb{R}
%EndExpansion
}{\max }\left\Vert \textrm{Im}\left( e^{i\theta }X\right) \right\Vert . $\newline

 $(b)\,\,\omega\left(\left[\begin{array}{cc}
              0&X\\
              X&0
              \end{array}\right]\right)
              = \omega(X).$
\end{lemma}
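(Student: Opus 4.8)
The plan is to derive both parts from the single elementary fact that a self-adjoint operator $S$ satisfies $\|S\| = \omega(S) = \sup_{\|x\|=1}|\langle Sx,x\rangle|$, together with the scalar identity $\max_{\theta\in\mathbb{R}}|\mathrm{Re}(e^{i\theta}z)| = |z|$ valid for every $z\in\mathbb{C}$.

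For $(a)$ I would start from the observation that $\mathrm{Re}(e^{i\theta}X) = \tfrac12(e^{i\theta}X + e^{-i\theta}X^*)$ is self-adjoint, so that $\|\mathrm{Re}(e^{i\theta}X)\| = \sup_{\|x\|=1}|\mathrm{Re}(e^{i\theta}\langle Xx,x\rangle)|$. It then suffices to prove $\max_{\theta}\sup_{\|x\|=1}|\mathrm{Re}(e^{i\theta}\langle Xx,x\rangle)| = \sup_{\|x\|=1}|\langle Xx,x\rangle| = \omega(X)$. The inequality ``$\le$'' is immediate because $|\mathrm{Re}(w)|\le|w|$ for all $w$. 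For ``$\ge$'', I would fix a unit vector $x$, write $\langle Xx,x\rangle = |\langle Xx,x\rangle|\,e^{i\alpha}$, and take $\theta=-\alpha$ so that the real part becomes $|\langle Xx,x\rangle|$; taking the supremum over $x$ gives the claim. The supremum over $x$ and the maximum over $\theta$ are thus exchanged via two pointwise comparisons, so no minimax theorem is needed, and the $\max$ over $\theta$ is genuinely attained since $\theta\mapsto\|\mathrm{Re}(e^{i\theta}X)\|$ is continuous and $2\pi$-periodic. The assertion for $\mathrm{Im}$ follows at once from $\mathrm{Im}(e^{i\theta}X) = \mathrm{Re}(e^{i(\theta-\pi/2)}X)$, since letting $\theta$ run over $\mathbb{R}$ yields the same supremum.

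For $(b)$ I would apply $(a)$ to $T = \left[\begin{smallmatrix}0&X\\X&0\end{smallmatrix}\right]$. A direct computation gives $\mathrm{Re}(e^{i\theta}T) = \left[\begin{smallmatrix}0&S_\theta\\S_\theta&0\end{smallmatrix}\right]$ with $S_\theta := \mathrm{Re}(e^{i\theta}X)$ self-adjoint; squaring this off-diagonal operator produces $S_\theta^2\oplus S_\theta^2$, so $\|\mathrm{Re}(e^{i\theta}T)\| = \|S_\theta\| = \|\mathrm{Re}(e^{i\theta}X)\|$. Taking the maximum over $\theta$ and using $(a)$ on both sides gives $\omega(T)=\omega(X)$. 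An alternative worth keeping in mind is the slicker unitary-equivalence argument: conjugating $T$ by the self-adjoint unitary $\tfrac{1}{\sqrt2}\left[\begin{smallmatrix}I&I\\I&-I\end{smallmatrix}\right]$ turns it into $X\oplus(-X)$, whose numerical radius equals $\max\{\omega(X),\omega(-X)\}=\omega(X)$ because the numerical radius of a direct sum is the supremum of those of the summands.

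The only step that requires any real attention is the interchange of $\sup_x$ and $\max_\theta$ in part $(a)$; as indicated it is handled by two one-line pointwise estimates rather than any compactness or minimax argument, and everything else is routine manipulation of block operators.
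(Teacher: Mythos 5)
Your proof is correct. Note that the paper does not prove this lemma at all---it is quoted from the cited references (Yamazaki for part $(a)$, Hirzallah--Kittaneh--Shebrawi for the block-matrix facts)---so what you have written is a self-contained verification of material the paper takes as known. Your argument for $(a)$ is exactly the standard one: since $\mathrm{Re}(e^{i\theta}X)$ is self-adjoint, its norm is $\sup_{\|x\|=1}\left|\mathrm{Re}\left(e^{i\theta}\langle Xx,x\rangle\right)\right|$, the bound $|\mathrm{Re}(w)|\le |w|$ gives ``$\le$'', and choosing $\theta$ to rotate $\langle Xx,x\rangle$ onto the positive axis gives ``$\ge$''; the reduction of the $\mathrm{Im}$ statement via $\mathrm{Im}(e^{i\theta}X)=\mathrm{Re}\left(e^{i(\theta-\pi/2)}X\right)$ and the attainment of the maximum by periodicity and continuity are both fine. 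For $(b)$, both of your routes work: the computation $\left\|\mathrm{Re}(e^{i\theta}T)\right\|=\left\|\mathrm{Re}(e^{i\theta}X)\right\|$ (since the off-diagonal self-adjoint block operator squares to $S_\theta^2\oplus S_\theta^2$) combined with $(a)$, and the conjugation by $\tfrac{1}{\sqrt2}\left[\begin{smallmatrix} I&I\\ I&-I\end{smallmatrix}\right]$ sending $T$ to $X\oplus(-X)$ together with $\omega(A\oplus B)=\max\{\omega(A),\omega(B)\}$; the latter is essentially the argument in the cited literature. No gaps.
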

\bigskip The next lemma follows from the spectral theorem for positive operators and Jensen inequality; see \cite{KIT}.
\begin{lemma}\label{3}
 Let $T\in{\mathbb B}({\mathscr H})$, $ T \geq 0$ and $x\in {\mathscr H}$ such that $\|x\|\leq1$. Then\\
$(a)\,\, \left\langle Tx, x\right\rangle^{r} \leq  \left\langle T^{r}x, x\right\rangle$ for $ r\geq 1.$\\
$(b)\,\,\left\langle T ^{r}x, x\right\rangle  \leq  \left\langle Tx, x\right\rangle^{r}$ for $ 0<r\leq 1$.\\
\end{lemma}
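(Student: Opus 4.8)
The plan is to invoke the spectral theorem for the positive operator $T$ and then reduce both inequalities to Jensen's inequality for a suitable probability measure, the only extra ingredient being an elementary estimate that absorbs the gap between the hypotheses $\|x\|\le 1$ and $\|x\|=1$.

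First I would dispose of the trivial case $x=0$, in which both sides vanish for every $r>0$, and set $m:=\|x\|^{2}\in(0,1]$. Let $T=\int_{0}^{\|T\|}t\,dE(t)$ be the spectral resolution of $T$, and let $\mu$ be the positive Borel measure on $[0,\|T\|]$ associated with $x$, i.e.\ $\mu(\Delta)=\langle E(\Delta)x,x\rangle$; its total mass is $\mu([0,\|T\|])=\|x\|^{2}=m$, so $\nu:=m^{-1}\mu$ is a probability measure and, for every $s>0$, $\langle T^{s}x,x\rangle=\int_{0}^{\|T\|}t^{s}\,d\mu(t)=m\int_{0}^{\|T\|}t^{s}\,d\nu(t)$.

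For part $(a)$ the function $t\mapsto t^{r}$ is convex on $[0,\infty)$ since $r\ge1$, so Jensen's inequality for $\nu$ gives $\bigl(\int t\,d\nu\bigr)^{r}\le\int t^{r}\,d\nu$; multiplying by $m^{r}$ and using $m^{r}\le m$ (valid because $0<m\le1$ and $r-1\ge0$) yields $\langle Tx,x\rangle^{r}=m^{r}\bigl(\int t\,d\nu\bigr)^{r}\le m^{r}\int t^{r}\,d\nu\le m\int t^{r}\,d\nu=\langle T^{r}x,x\rangle$. For part $(b)$ the function $t\mapsto t^{r}$ is concave on $[0,\infty)$ since $0<r\le1$, so Jensen gives $\int t^{r}\,d\nu\le\bigl(\int t\,d\nu\bigr)^{r}$, whence $\langle T^{r}x,x\rangle=m\int t^{r}\,d\nu=m^{1-r}\bigl(m\int t\,d\nu\bigr)^{r}\le\bigl(m\int t\,d\nu\bigr)^{r}=\langle Tx,x\rangle^{r}$, using $m^{1-r}\le1$.

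There is no genuine obstacle here; the only point that deserves attention --- and the reason $\|x\|\le1$ already suffices --- is the normalization to a true probability measure together with the bounds $m^{r-1}\le1$ (for $(a)$) and $m^{1-r}\le1$ (for $(b)$) when $m\in(0,1]$. An equivalent and perhaps slicker route is to reduce directly to the unit-vector case by replacing $T$ with $T\oplus0\in{\mathbb B}({\mathscr H}\oplus\mathbb{C})$ and $x$ with the unit vector $\bigl(x,\sqrt{1-\|x\|^{2}}\bigr)$, since $(T\oplus0)^{r}=T^{r}\oplus0$ for $r>0$; after this both parts are immediate from the classical $\|x\|=1$ form of the inequalities. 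I would present the first argument for definiteness.
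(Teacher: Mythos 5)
Your proof is correct and is essentially the paper's argument: the paper applies the McCarthy inequality to the normalized vector $u=x/\|x\|$ and absorbs the factor $\|x\|^{2r-2}\le 1$ (resp. $\|x\|^{2-2r}\le 1$), which is exactly your bound $m^{r-1}\le 1$ (resp. $m^{1-r}\le 1$) with $m=\|x\|^{2}$; the only difference is that you re-derive the unit-vector case from the spectral theorem and Jensen's inequality rather than citing it. Your dilation alternative ($T\oplus 0$ acting on the unit vector $\bigl(x,\sqrt{1-\|x\|^{2}}\bigr)$) is an equally valid shortcut, and you also dispose of the case $x=0$, which the paper's normalization tacitly skips.
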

\begin{proof}
Let $ r\geq 1$ and  $x\in {\mathscr H}$ such that $\|x\|\leq1$. Fix $u=\frac{x}{\|x\|}$. Using the McCarty inequality we have
$\left\langle Tu, u\right\rangle^{r} \leq  \left\langle T^{r}u, u\right\rangle$, whence
\begin{align*}
\left\langle Tx, x\right\rangle^{r} &\leq \|x\|^{2r-2} \left\langle T^{r}x, x\right\rangle\\&\leq\left\langle T^{r}x, x\right\rangle\qquad(\textrm {since\,}\|x\|\leq1\,\textrm {and\,}2r-2\geq0).
\end{align*}
Hence, we get the first inequality. The proof of the second inequality is similar.
\end{proof}
\begin{lemma}\cite[Theorem 1]{KIT}\label{5}
Let $T\in{\mathbb B}({\mathscr H})$ and $x, y\in {\mathscr H}$ be any vectors.  If $f$, $g$ are nonnegative  continuous functions on $[0, \infty)$ which are satisfying the relation $f(t)g(t)=t\,(t\in[0, \infty))$, then
\begin{align*}
| \left\langle Tx, y \right\rangle |^2 \leq \left\langle f^2(|T |)x ,x \right\rangle\, \left\langle g^2(| T^{*}|)y,y\right\rangle.
 \end{align*}
 \end{lemma}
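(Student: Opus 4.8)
The plan is to reduce the claim to the ordinary Cauchy--Schwarz inequality after rewriting $\langle Tx,y\rangle$ by means of the polar decomposition and the continuous functional calculus. Let $T=U|T|$ be the polar decomposition, where $U$ is the partial isometry with initial space $\overline{\operatorname{ran}|T|}$ and final space $\overline{\operatorname{ran}T}$; in particular $U^{*}U$ is the orthogonal projection onto $\overline{\operatorname{ran}|T|}$, so that $U^{*}U|T|=|T|U^{*}U=|T|$. Since $f,g$ are nonnegative continuous functions on $[0,\infty)$ with $f(t)g(t)=t$ and $|T|\geq 0$, the functional calculus gives the commuting factorization $|T|=g(|T|)f(|T|)$ with $f(|T|),g(|T|)\geq 0$. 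Hence, for $x,y\in{\mathscr H}$,
\begin{align*}
\langle Tx,y\rangle=\langle |T|x,U^{*}y\rangle=\big\langle g(|T|)f(|T|)x,\,U^{*}y\big\rangle=\big\langle f(|T|)x,\,g(|T|)U^{*}y\big\rangle,
\end{align*}
and the Cauchy--Schwarz inequality yields
\begin{align*}
|\langle Tx,y\rangle|^{2}\leq\|f(|T|)x\|^{2}\,\|g(|T|)U^{*}y\|^{2}=\big\langle f^{2}(|T|)x,x\big\rangle\,\big\langle U g^{2}(|T|)U^{*}y,\,y\big\rangle.
\end{align*}

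To finish I would compare $U g^{2}(|T|)U^{*}$ with $g^{2}(|T^{*}|)$. From $|T^{*}|^{2}=TT^{*}=U|T|^{2}U^{*}=(U|T|U^{*})^{2}$ together with the positivity of $U|T|U^{*}$ one gets $|T^{*}|=U|T|U^{*}$. Write $h(t):=g^{2}(t)-g^{2}(0)$, a continuous function with $h(0)=0$; approximating $h$ uniformly on $[0,\|T\|]$ by polynomials without constant term and using $U^{*}U|T|=|T|$ shows $U h(|T|)U^{*}=h(U|T|U^{*})=h(|T^{*}|)$. Therefore
\begin{align*}
U g^{2}(|T|)U^{*}=g^{2}(0)\,UU^{*}+h(|T^{*}|)=g^{2}(|T^{*}|)-g^{2}(0)\,(I-UU^{*})\leq g^{2}(|T^{*}|),
\end{align*}
since $g^{2}(0)\geq 0$ and $I-UU^{*}\geq 0$. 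Substituting this bound into the previous display gives $|\langle Tx,y\rangle|^{2}\leq\langle f^{2}(|T|)x,x\rangle\,\langle g^{2}(|T^{*}|)y,y\rangle$, which is the asserted inequality.

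The step I expect to be the main obstacle is this last comparison: one must transport the functional calculus through $U$, which is only a partial isometry rather than a unitary, so the tempting identity $Ug^{2}(|T|)U^{*}=g^{2}(|T^{*}|)$ is false in general and fails precisely by the nonnegative operator $g^{2}(0)(I-UU^{*})$ when $\operatorname{ran}T$ is not dense. Isolating the constant $g^{2}(0)$ and working instead with $h(t)=g^{2}(t)-g^{2}(0)$, which vanishes at the origin, repairs the argument and, conveniently, leaves the estimate pointing in the right direction; alternatively one could first treat invertible $T$ (where $U$ is unitary and equality holds) and pass to the general case by a limiting argument, but the route above seems shortest.
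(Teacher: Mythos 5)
Your argument is correct; the paper gives no proof of this lemma, quoting it directly from \cite[Theorem 1]{KIT}, and your route --- polar decomposition $T=U|T|$, the commuting factorization $|T|=g(|T|)f(|T|)$, Cauchy--Schwarz, and then transporting $g^{2}$ through the partial isometry to compare $Ug^{2}(|T|)U^{*}$ with $g^{2}(|T^{*}|)$ --- is essentially the standard proof of the cited result. Your handling of the constant term via $h(t)=g^{2}(t)-g^{2}(0)$, giving $Ug^{2}(|T|)U^{*}=g^{2}(|T^{*}|)-g^{2}(0)\,(I-UU^{*})\leq g^{2}(|T^{*}|)$, correctly disposes of the only delicate point (that $U$ is merely a partial isometry, so the naive identity $Ug^{2}(|T|)U^{*}=g^{2}(|T^{*}|)$ can fail when $\operatorname{ran}T$ is not dense).
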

\bigskip Now, we are in position to demonstrate the main results of this section by
using some ideas from \cite{CAL, sheikh}.
\begin{theorem}\label{main1}
Let
$T=\left[\begin{array}{cc}
 0&X\\
 Y&0
 \end{array}\right]\in {\mathbb B}({\mathscr H_1\oplus\mathscr H_2})$,  $r\geq 1$ and $f$, $g$ be nonnegative  continuous  functions on $[0, \infty)$ satisfying the relation $f(t)g(t)=t\,(t\in[0, \infty))$. Then
 \begin{align*}
 \omega^{r}(T)\leq 2^{r-2}\left\|f^{2r}(|X|)+g^{2r}(|Y^*|)\right\|^\frac{1}{2}\left\|f^{2r}(|Y|)+g^{2r}(|X^*|)\right\|^\frac{1}{2}
   \end{align*}
   and \begin{align*}
 \omega^{r}(T)\leq 2^{r-2}\left\|f^{2r}(|X|)+f^{2r}(|Y^*|)\right\|^\frac{1}{2}\left\|g^{2r}(|Y|)+g^{2r}(|X^*|)\right\|^\frac{1}{2}.
   \end{align*}
 \end{theorem}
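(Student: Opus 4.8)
The plan is to start from the variational characterization of the numerical radius in Lemma \ref{1}(a), which gives $\omega(T)=\max_{\theta}\|\mathrm{Re}(e^{i\theta}T)\|$, and then to express $\langle \mathrm{Re}(e^{i\theta}T)x,x\rangle$ explicitly when $x=x_1\oplus x_2\in\mathscr H_1\oplus\mathscr H_2$ with $\|x\|=1$. A direct computation shows
\begin{align*}
\langle \mathrm{Re}(e^{i\theta}T)x,x\rangle=\mathrm{Re}\big(e^{i\theta}\langle Xx_2,x_1\rangle\big)+\mathrm{Re}\big(e^{i\theta}\langle Yx_1,x_2\rangle\big),
\end{align*}
so that $|\langle \mathrm{Re}(e^{i\theta}T)x,x\rangle|\le |\langle Xx_2,x_1\rangle|+|\langle Yx_1,x_2\rangle|$ for every $\theta$. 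Raising to the $r$-th power and using convexity of $t\mapsto t^r$ (i.e. $(a+b)^r\le 2^{r-1}(a^r+b^r)$) gives
\begin{align*}
\omega^r(T)\le 2^{r-1}\sup_{\|x_1\|^2+\|x_2\|^2=1}\Big(|\langle Xx_2,x_1\rangle|^r+|\langle Yx_1,x_2\rangle|^r\Big).
\end{align*}

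Next I would apply the mixed Cauchy–Schwarz inequality of Lemma \ref{5} to each inner product, first with the pair $(x_2,x_1)$ for $X$ and the pair $(x_1,x_2)$ for $Y$:
\begin{align*}
|\langle Xx_2,x_1\rangle|^2\le\langle f^2(|X|)x_2,x_2\rangle\langle g^2(|X^*|)x_1,x_1\rangle,\qquad
|\langle Yx_1,x_2\rangle|^2\le\langle f^2(|Y|)x_1,x_1\rangle\langle g^2(|Y^*|)x_2,x_2\rangle.
\end{align*}
Raising these to the power $r/2$ and invoking Lemma \ref{3}(a) (since $\|x_i\|\le1$ and $r\ge1$, so $r/2\ge1/2$—here I'd use it in the form $\langle S u,u\rangle^{r/2}\le\langle S^{r/2}\cdot$ applied carefully, or rather first note $\langle f^2(|X|)x_2,x_2\rangle^{r/2}\le\langle f^{2r}(|X|)x_2,x_2\rangle^{1/2}\cdot$—actually the clean route is: for $r\ge 1$, $\langle Au,u\rangle^{r/2}=(\langle Au,u\rangle^{1/2})^r\le\langle A^{r}u,u\rangle^{1/2}$ does not directly hold, so instead use $\langle Au,u\rangle^{r}\le\langle A^ru,u\rangle$ from Lemma \ref{3}(a) with the substitution $A=f^2(|X|)$ and exponent $r$, giving $\langle f^2(|X|)x_2,x_2\rangle^{r}\le\langle f^{2r}(|X|)x_2,x_2\rangle$). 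Then by AM–GM on the product $\langle f^{2r}(|X|)x_2,x_2\rangle^{1/2}\langle g^{2r}(|X^*|)x_1,x_1\rangle^{1/2}\le\frac12\big(\langle f^{2r}(|X|)x_2,x_2\rangle+\langle g^{2r}(|X^*|)x_1,x_1\rangle\big)$, and similarly for the $Y$-term, one collects
\begin{align*}
|\langle Xx_2,x_1\rangle|^r+|\langle Yx_1,x_2\rangle|^r\le\tfrac12\Big(\langle (f^{2r}(|X|)+g^{2r}(|Y^*|))x_2,x_2\rangle+\langle(f^{2r}(|Y|)+g^{2r}(|X^*|))x_1,x_1\rangle\Big).
\end{align*}

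Finally, taking the supremum over unit vectors $x_1\oplus x_2$: the quantity $\langle Px_2,x_2\rangle+\langle Qx_1,x_1\rangle$ with $P,Q\ge0$ and $\|x_1\|^2+\|x_2\|^2=1$ is bounded by $\max\{\|P\|,\|Q\|\}$, but to get the product form in the statement I would instead bound it by $\|P\|^{1/2}\|Q\|^{1/2}$ times something—no; the cleaner device is to observe that for the stated product bound one should split the two Cauchy–Schwarz estimates more symmetrically and use $\sqrt{st}\le\frac12(s+t)$ only at the very end on $s=\|f^{2r}(|X|)+g^{2r}(|Y^*|)\|$ and $t=\|f^{2r}(|Y|)+g^{2r}(|X^*|)\|$, which is where the $2^{r-2}=2^{r-1}\cdot\frac12$ constant comes from. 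The main obstacle is precisely this bookkeeping: arranging the two applications of Lemma \ref{5} and the AM–GM steps so that the operators regroup exactly into $f^{2r}(|X|)+g^{2r}(|Y^*|)$ acting on $x_2$ and $f^{2r}(|Y|)+g^{2r}(|X^*|)$ acting on $x_1$ (for the first inequality), versus the alternative pairing $f$ with $f$ and $g$ with $g$ (for the second inequality, obtained by applying Lemma \ref{5} to $\langle Xx_2,x_1\rangle=\overline{\langle X^*x_1,x_2\rangle}$ and $\langle Yx_1,x_2\rangle$ with the roles of the two factors swapped), and then checking that $\sup_{\|x\|=1}(\langle Px_2,x_2\rangle+\langle Qx_1,x_1\rangle)\le\|P\|^{1/2}\|Q\|^{1/2}$ fails in general—so the correct final step must keep the two norms under a single supremum and only afterwards apply $\sqrt{\|P\|\,\|Q\|}\le\frac12(\|P\|+\|Q\|)$ in reverse, i.e. bound $\langle Px_2,x_2\rangle+\langle Qx_1,x_1\rangle\le\|P\|\|x_2\|^2+\|Q\|\|x_1\|^2$ and then this linear-in-$\|x_i\|^2$ expression is maximized at an endpoint unless $\|P\|=\|Q\|$, giving $\max\{\|P\|,\|Q\|\}$; to recover the geometric mean one must instead not separate $x_1$ from $x_2$ but estimate $|\langle Xx_2,x_1\rangle|^r\le\langle f^{2r}(|X|)x_2,x_2\rangle^{1/2}\langle g^{2r}(|X^*|)x_1,x_1\rangle^{1/2}$ and sum, then apply Cauchy–Schwarz for the sum of two products to pull out $\big(\langle f^{2r}(|X|)x_2,x_2\rangle+\langle g^{2r}(|Y^*|)x_2,x_2\rangle\big)^{1/2}\big(\langle\cdots\rangle x_1,x_1\rangle\big)^{1/2}\le\|f^{2r}(|X|)+g^{2r}(|Y^*|)\|^{1/2}\|f^{2r}(|Y|)+g^{2r}(|X^*|)\|^{1/2}$ since $\|x_1\|,\|x_2\|\le1$. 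That last Cauchy–Schwarz-for-two-terms step, combined with $2^{r-1}$ from convexity, yields exactly $2^{r-2}$ after one more $\sqrt{ab}\le\frac12(a+b)$ is \emph{not} needed—so I expect the delicate point to be getting the constant right and confirming the sum-of-two-products Cauchy–Schwarz rather than any deep inequality.
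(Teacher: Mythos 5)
Your overall route is the same as the paper's: bound $|\langle T\mathbf{x},\mathbf{x}\rangle|$ by $|\langle Xx_2,x_1\rangle|+|\langle Yx_1,x_2\rangle|$, use convexity of $t^r$ to pick up $2^{r-1}$, apply Lemma \ref{5} to each term, push the power inside with Lemma \ref{3}(a), and then use the discrete Cauchy--Schwarz inequality $a_1b_1+a_2b_2\le (a_1^2+a_2^2)^{1/2}(b_1^2+b_2^2)^{1/2}$ to regroup the two products into $\langle(f^{2r}(|X|)+g^{2r}(|Y^*|))x_2,x_2\rangle^{1/2}\langle(f^{2r}(|Y|)+g^{2r}(|X^*|))x_1,x_1\rangle^{1/2}$; after considerable meandering you do land on exactly this chain, and the detour through Lemma \ref{1}(a) is harmless but unnecessary, since $\omega(T)=\sup_{\|\mathbf{x}\|=1}|\langle T\mathbf{x},\mathbf{x}\rangle|$ already suffices.

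The genuine gap is in the final constant. After bounding the two quadratic forms by operator norms you are left with $2^{r-1}\left\|f^{2r}(|X|)+g^{2r}(|Y^*|)\right\|^{1/2}\left\|f^{2r}(|Y|)+g^{2r}(|X^*|)\right\|^{1/2}\|x_1\|\,\|x_2\|$, and you discard the factor $\|x_1\|\,\|x_2\|$ via ``since $\|x_1\|,\|x_2\|\le 1$,'' asserting that no further $\sqrt{ab}\le\frac12(a+b)$ step is needed. That yields only the constant $2^{r-1}$, twice the claimed $2^{r-2}$. The missing step is precisely the one you declare unnecessary: keep $\|x_1\|\,\|x_2\|$ and bound it by $\frac12(\|x_1\|^2+\|x_2\|^2)=\frac12$, which is how the paper turns $\frac{2^r}{2}$ into $\frac{2^r}{4}=2^{r-2}$. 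Your alternative suggestion of applying $\sqrt{st}\le\frac12(s+t)$ to the two norms $s,t$ at the very end goes the wrong way: it weakens the product bound and does not produce the stated form at all. Two smaller points: the intermediate AM--GM on the quadratic forms (leading to $\max\{\|P\|,\|Q\|\}$) is a dead end you correctly abandon and should simply delete; and for the second inequality the clean move is to apply Lemma \ref{5} to the $Y$ term with the roles of $f$ and $g$ interchanged, as in \eqref{main1eq}, rather than rewriting $\langle Xx_2,x_1\rangle$ as $\overline{\langle X^*x_1,x_2\rangle}$, which regroups into the $f\leftrightarrow g$-swapped bound instead of the one stated.
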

\begin{proof}
Let $\mathbf{x}=\left[\begin{array}{cc}
 x_1\\
 x_2
 \end{array}\right] \in {\mathscr H_1\oplus\mathscr H_2}$ be a unit vector (i.e., $\|x_1\|^2+\|x_2\|^2=1$). Then
 \begin{align*}
 &|\left\langle T\mathbf{x}, \mathbf{x} \right\rangle |^{r}\\&
 =|\left\langle Xx_2, x_1 \right\rangle+\left\langle Yx_1, x_2 \right\rangle |^{r}\\&
 \leq\left(|\left\langle Xx_2, x_1 \right\rangle|+|\left\langle Yx_1, x_2 \right\rangle |\right)^{r} \qquad (\textrm {by the triangular inequality})\\&
 \leq\frac{2^r}{2}\left(|\left\langle Xx_2, x_1 \right\rangle|^r+|\left\langle Yx_1, x_2 \right\rangle |^{r}\right)
 \qquad (\textrm {by the convexity\,} f(t)=t^r)\\&
 \leq\frac{2^r}{2}\Big(\left(\left\langle f^2(|X|)x_2, x_2 \right\rangle^\frac{1}{2}\left\langle g^2(|X^*|)x_1, x_1 \right\rangle^\frac{1}{2}\right)^r
 \\&\qquad+\left(\left\langle f^2(|Y|)x_1, x_1 \right\rangle^\frac{1}{2}\left\langle g^2(|Y^*|)x_2, x_2 \right\rangle^\frac{1}{2} \right)^{r}\Big)
\qquad(\textrm {by Lemma\,\,}\ref{5})\\&\leq\frac{2^r}{2}\left(\left\langle f^{2r}(|X|)x_2, x_2 \right\rangle^\frac{1}{2}\left\langle g^{2r}|X^*|x_1, x_1 \right\rangle^\frac{1}{2}
 +\left\langle f^{2r}(|Y|)x_1, x_1 \right\rangle^\frac{1}{2}\left\langle g^{2r}(|Y^*|)x_2, x_2 \right\rangle^\frac{1}{2}\right)\\&
 \qquad\qquad\qquad\qquad\qquad\qquad\qquad\qquad\qquad (\textrm {by Lemma\,\,\ref{3}(a)})\\&
 \leq\frac{2^r}{2}\left(\left\langle f^{2r}(|X|)x_2, x_2 \right\rangle+\left\langle g^{2r}(|Y^*|)x_2, x_2 \right\rangle\right)^\frac{1}{2}\\&\,\,\,\times
 \left(\left\langle f^{2r}(|Y|)x_1, x_1 \right\rangle+\left\langle g^{2r}(|X^*|)x_1, x_1 \right\rangle\right)^\frac{1}{2}
  \,\,\, (\textrm {by the Cauchy-Schwarz inequality})\\&
  =\frac{2^r}{2}\left\langle (f^{2r}(|X|)+g^{2r}(|Y^*|))x_2, x_2 \right\rangle^\frac{1}{2} \left\langle (f^{2r}(|Y|)+g^{2r}(|X^*|))x_1, x_1 \right\rangle^\frac{1}{2} \\&
 \leq\frac{2^r}{2}\left\|f^{2r}(|X|)+g^{2r}(|Y^*|)\right\|^\frac{1}{2}\left\|f^{2r}(|Y|)+g^{2r}(|X^*|)\right\|^\frac{1}{2}\|x_1\|\|x_2\|\\&
 \leq\frac{2^r}{2}\left\|f^{2r}(|X|)+g^{2r}(|Y^*|)\right\|^\frac{1}{2}\left\|f^{2r}(|Y|)+g^{2r}(|X^*|)\right\|^\frac{1}{2}\left(\frac{\|x_1\|^2+\|x_2\|^2}{2}\right)
 \\&\qquad\qquad\qquad\qquad\qquad\qquad\qquad(\textrm {by the arithmetic-geometric mean inequality})\\&
 =\frac{2^r}{4}\left\|f^{2r}(|X|)+g^{2r}(|Y^*|)\right\|^\frac{1}{2}\left\|f^{2r}(|Y|)+g^{2r}(|X^*|)\right\|^\frac{1}{2}.
 \end{align*}
 Hence, we get the first inequality. Now, applying this fact
 \begin{align}\label{main1eq}
 &|\left\langle T\mathbf{x}, \mathbf{x} \right\rangle |^{r}\nonumber\\&
 =|\left\langle Xx_2, x_1 \right\rangle+\left\langle Yx_1, x_2 \right\rangle |^{r}\nonumber\\&
 \leq\left(|\left\langle Xx_2, x_1 \right\rangle|+|\left\langle Yx_1, x_2 \right\rangle |\right)^{r} \qquad (\textrm {by the triangular inequality})\nonumber\\&
 \leq\frac{2^r}{2}\left(|\left\langle Xx_2, x_1 \right\rangle|^r+|\left\langle Yx_1, x_2 \right\rangle |^{r}\right)
 \qquad (\textrm {by the convexity\,} f(t)=t^r)\nonumber\\&
 \leq\frac{2^r}{2}\left(\left(\left\langle f^2(|X|)x_2, x_2 \right\rangle^\frac{1}{2}\left\langle g^2(|X^*|)x_1, x_1 \right\rangle^\frac{1}{2}\right)^r\right.\nonumber
\\&\qquad +\left.\left(\left\langle g^2(|Y|)x_1, x_1 \right\rangle^\frac{1}{2}\left\langle f^2(|Y^*|)x_2, x_2 \right\rangle^\frac{1}{2} \right)^{r}\right)
\qquad(\textrm {by Lemma\,\,}\ref{5})
 \end{align}
  and  a similar argument to the proof of the first inequality we have the second inequality and this completes the proof of the theorem.
\end{proof}
\bigskip Theorem \ref{main1} includes a special case as follows.
\begin{corollary}\label{corol1}
Let
$T=\left[\begin{array}{cc}
 0&X\\
 Y&0
 \end{array}\right]\in {\mathbb B}({\mathscr H_1\oplus\mathscr H_2})$, $0\leq p\leq1$ and $r\geq1$. Then
 \begin{align*}
 \omega^{r}(T)\leq 2^{r-2}  \left\|  | X |^{2rp} +   | Y^* |^{2r(1-p)} \right\|^{\frac{1}{2}}\left\|| Y |^{2rp} +  | X^* |^{2r(1-p)} \right\|^{\frac{1}{2}}
   \end{align*}
   and
\begin{align*}
 \omega^{r}(T)\leq 2^{r-2}  \left\|  | X |^{2rp} +   | Y^* |^{2rp} \right\|^{\frac{1}{2}}\left\|| Y |^{2r(1-p)} +  | X^* |^{2r(1-p)} \right\|^{\frac{1}{2}}.
   \end{align*}
  \end{corollary}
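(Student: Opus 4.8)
The plan is to obtain Corollary \ref{corol1} as an immediate specialization of Theorem \ref{main1}, choosing the pair of functions $(f,g)$ appropriately. First I would take $f(t)=t^{p}$ and $g(t)=t^{1-p}$ on $[0,\infty)$, where $0\leq p\leq 1$. Since both exponents $p$ and $1-p$ are nonnegative, these functions are nonnegative and continuous on all of $[0,\infty)$ (in particular there is no singularity at $t=0$), and they satisfy the defining relation $f(t)g(t)=t^{p}\,t^{1-p}=t$ for every $t\in[0,\infty)$. Hence the hypotheses of Theorem \ref{main1} are met for the given off-diagonal operator matrix $T=\left[\begin{array}{cc}0&X\\ Y&0\end{array}\right]$ and any $r\geq 1$.

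Next I would rewrite the functional-calculus terms appearing in Theorem \ref{main1}. Because $f^{2r}(t)=(t^{p})^{2r}=t^{2rp}$ and $g^{2r}(t)=(t^{1-p})^{2r}=t^{2r(1-p)}$, the spectral theorem for the positive operators $|X|,|Y|,|X^{*}|,|Y^{*}|$ gives $f^{2r}(|X|)=|X|^{2rp}$, $g^{2r}(|Y^{*}|)=|Y^{*}|^{2r(1-p)}$, and likewise for the remaining absolute values. Substituting these identities into the first inequality of Theorem \ref{main1} yields
$$\omega^{r}(T)\leq 2^{r-2}\left\| \,| X |^{2rp} + | Y^* |^{2r(1-p)} \right\|^{\frac{1}{2}}\left\|\,| Y |^{2rp} + | X^* |^{2r(1-p)} \right\|^{\frac{1}{2}},$$
which is exactly the first asserted estimate. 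Substituting the same identities into the second inequality of Theorem \ref{main1} gives
$$\omega^{r}(T)\leq 2^{r-2}\left\| \,| X |^{2rp} + | Y^* |^{2rp} \right\|^{\frac{1}{2}}\left\|\,| Y |^{2r(1-p)} + | X^* |^{2r(1-p)} \right\|^{\frac{1}{2}},$$
which is the second asserted estimate.

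There is essentially no obstacle: the statement is a pure corollary, and the only point worth a sentence of care is confirming that $t\mapsto t^{p}$ is admissible (nonnegative and continuous, including at $t=0$) for every $p\in[0,1]$, so that Lemma \ref{5} and Theorem \ref{main1} may be invoked; the boundary cases $p=0$ (where $f\equiv 1$, $g(t)=t$) and $p=1$ (where $f(t)=t$, $g\equiv 1$) are included, and for $p=\tfrac12$ one recovers the symmetric forms closest to \eqref{133}. No additional estimates are required.
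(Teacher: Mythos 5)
Your proof is correct and is essentially the paper's own argument: the paper also obtains this corollary by specializing Theorem \ref{main1} to $f(t)=t^{p}$, $g(t)=t^{1-p}$ with $0\leq p\leq 1$, so that $f^{2r}(|X|)=|X|^{2rp}$, $g^{2r}(|Y^{*}|)=|Y^{*}|^{2r(1-p)}$, and so on. Your added check that these functions are admissible (nonnegative, continuous, with $f(t)g(t)=t$) is a harmless extra verification the paper leaves implicit.
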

  \begin{proof}
  The result follows immediately from Theorem \ref{main1} for $f(t)=t^p$ and $g(t)=t^{1-p}\,\,(0\leq p\leq1)$.
  \end{proof}
 \begin{remark} Taking $f(t)=g(t)=t^{\frac{1}{2}}\,(t\in[0,\infty))$ and $r=1$ in Theorem \ref{main1},  we get (see \cite[Theorem 4]{CAL}\label{100})
 \begin{align*}
 \omega(T)\leq \frac{1}{2} \left\|  | X | +   | Y^* | \right\|^{\frac{1}{2}}\left\|| Y | +  | X^* | \right\|^{\frac{1}{2}},
   \end{align*}
   where
$T=\left[\begin{array}{cc}
 0&X\\
 Y&0
 \end{array}\right]\in {\mathbb B}({\mathscr H_1\oplus\mathscr H_2})$.
 \end{remark}

  \bigskip If we put $Y=X$ in Theorem \ref{main1}, then by using Lemma \ref{1}(b) we get an extension of Inequality \eqref{13}.
 \begin{corollary}
Let
$X\in {\mathbb B}({\mathscr H})$,  $r\geq 1$ and $f$, $g$ be nonnegative  continuous  functions on $[0, \infty)$ satisfying the relation $f(t)g(t)=t\,(t\in[0, \infty))$. Then
 \begin{align*}
 \omega^{r}(X)\leq 2^{r-2}\left\|f^{2r}(|X|)+g^{2r}(|X^*|)\right\|
   \end{align*}
   and \begin{align*}
 \omega^{r}(X)\leq 2^{r-2}\left\|f^{2r}(|X|)+f^{2r}(|X^*|)\right\|^\frac{1}{2}\left\|g^{2r}(|X|)+g^{2r}(|X^*|)\right\|^\frac{1}{2}.
 \end{align*}
 \end{corollary}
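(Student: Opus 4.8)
The plan is to obtain this statement as a direct specialization of Theorem~\ref{main1}, so there is really nothing new to prove beyond bookkeeping. First I would put $Y=X$ in Theorem~\ref{main1}; the operator $T$ then becomes the off-diagonal matrix $\left[\begin{array}{cc} 0&X\\ X&0 \end{array}\right]$. In the first inequality of Theorem~\ref{main1} the two norm factors are $\left\|f^{2r}(|X|)+g^{2r}(|Y^*|)\right\|^{1/2}$ and $\left\|f^{2r}(|Y|)+g^{2r}(|X^*|)\right\|^{1/2}$; since $|Y|=|X|$ and $|Y^*|=|X^*|$ when $Y=X$, both factors equal $\left\|f^{2r}(|X|)+g^{2r}(|X^*|)\right\|^{1/2}$, and their product collapses to $\left\|f^{2r}(|X|)+g^{2r}(|X^*|)\right\|$. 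Thus Theorem~\ref{main1} gives $\omega^{r}(T)\leq 2^{r-2}\left\|f^{2r}(|X|)+g^{2r}(|X^*|)\right\|$. Applying the same substitution to the second inequality of Theorem~\ref{main1} yields $\omega^{r}(T)\leq 2^{r-2}\left\|f^{2r}(|X|)+f^{2r}(|X^*|)\right\|^{1/2}\left\|g^{2r}(|X|)+g^{2r}(|X^*|)\right\|^{1/2}$.

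Second, I would invoke Lemma~\ref{1}(b), which asserts $\omega\!\left(\left[\begin{array}{cc} 0&X\\ X&0 \end{array}\right]\right)=\omega(X)$. Hence $\omega(T)=\omega(X)$, and raising to the $r$-th power gives $\omega^{r}(T)=\omega^{r}(X)$. Substituting this identity into the two displayed bounds from the previous step produces exactly the two inequalities in the statement. In particular, taking $f(t)=g(t)=t^{1/2}$ and $r=1$ in the first inequality recovers, and hence extends, inequality~\eqref{13}.

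I do not expect any genuine obstacle: the argument is a one-line specialization once Theorem~\ref{main1} and Lemma~\ref{1}(b) are in hand. The only point that merits an explicit sentence is verifying that the choice $Y=X$ really does make both norm-factors in the first inequality identical — which it does, precisely because $|Y|=|X|$ and $|Y^{*}|=|X^{*}|$ — so that the geometric-mean form of Theorem~\ref{main1} degenerates into a single operator norm.
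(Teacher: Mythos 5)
Your proposal is correct and matches the paper's own argument exactly: the paper also obtains this corollary by setting $Y=X$ in Theorem~\ref{main1}, noting the two norm factors coincide in the first bound, and identifying $\omega(T)=\omega(X)$ via Lemma~\ref{1}(b). Nothing further is needed.
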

 \begin{corollary}
Let
$X, Y\in {\mathbb B}({\mathscr H})$ and $0\leq p \leq1$. Then%
\begin{align*}
 \omega^{\frac{r}{2}}\left( XY\right) \leq 2^{r-2}  \left\|  | X |^{2rp} +   | Y^* |^{2r(1-p)} \right\|^{\frac{1}{2}}\left\|| Y |^{2rp} +  | X^* |^{2r(1-p)} \right\|^{\frac{1}{2}}
   \end{align*}
   and
\begin{align*}
\omega^{\frac{r}{2}}\left( XY\right) \leq 2^{r-2}  \left\|  | X |^{2rp} +   | Y^* |^{2rp} \right\|^{\frac{1}{2}}\left\|| Y |^{2r(1-p)} +  | X^* |^{2r(1-p)} \right\|^{\frac{1}{2}}
   \end{align*}
\bigskip for $r\geq 1$.
\end{corollary}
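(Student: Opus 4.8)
The plan is to reduce this corollary to Corollary~\ref{corol1} by means of the power inequality for the numerical radius. Take ${\mathscr H}_{1}={\mathscr H}_{2}={\mathscr H}$ and form the off-diagonal matrix $T=\left[\begin{array}{cc}0&X\\Y&0\end{array}\right]\in{\mathbb B}({\mathscr H}\oplus{\mathscr H})$. A direct computation gives $T^{2}=\left[\begin{array}{cc}XY&0\\0&YX\end{array}\right]$, and since the numerical radius of a block-diagonal operator equals the maximum of the numerical radii of its diagonal blocks --- which follows from $\left\langle(A\oplus B)(u\oplus v),u\oplus v\right\rangle=\left\langle Au,u\right\rangle+\left\langle Bv,v\right\rangle$ together with the triangle inequality and $\|u\|^{2}+\|v\|^{2}=1$ --- we obtain $\omega(XY)\le\omega(T^{2})$.

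Next, the power inequality $\omega(T^{2})\le\omega(T)^{2}$ recalled in the introduction yields $\omega(XY)\le\omega(T)^{2}$, and raising both sides to the power $\tfrac{r}{2}$ (legitimate since $t\mapsto t^{r/2}$ is monotone for $r\ge1$) gives $\omega^{\frac{r}{2}}(XY)\le\omega^{r}(T)$. It therefore suffices to bound $\omega^{r}(T)$, and this is precisely what Corollary~\ref{corol1} supplies: its two displayed inequalities, read for this $T$, are exactly the two right-hand sides asserted here. Chaining the estimates completes the proof, and the constraints $0\le p\le1$ and $r\ge1$ are inherited verbatim from Corollary~\ref{corol1}.

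There is no serious obstacle; the argument is essentially a one-line reduction once Corollary~\ref{corol1} and the power inequality are available. The only points deserving explicit mention are the block-diagonal identity $\omega(A\oplus B)=\max\{\omega(A),\omega(B)\}$ and the observation that $XY$ appears as a diagonal entry of $T^{2}$, so that it is genuinely $\omega(XY)$ that gets controlled rather than the numerical radius of some less convenient combination of $X$ and $Y$.
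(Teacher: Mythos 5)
Your argument is correct and coincides with the paper's own proof: both form $T=\left[\begin{array}{cc}0&X\\Y&0\end{array}\right]$, note $T^{2}=\left[\begin{array}{cc}XY&0\\0&YX\end{array}\right]$, use $\omega^{\frac{1}{2}}(T^{2})\leq\omega(T)$ together with the block-diagonal identity to get $\omega^{\frac{r}{2}}(XY)\leq\omega^{r}(T)$, and then invoke Corollary \ref{corol1}. No gaps; your extra remarks on the block-diagonal identity simply make explicit what the paper leaves implicit.
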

\begin{proof}
It follows from the power inequality $\omega^{\frac{1}{2}}\left( T^{2}\right) \leq \omega\left( T\right)$
that
\begin{equation*}
\omega^{\frac{1}{2}}\left( T^{2}\right) =\omega^{\frac{1}{2}}\left( \left[
\begin{array}{cc}
XY & 0 \\
0 &YX%
\end{array}%
\right] \right) =\max \left\{ \omega^{\frac{1}{2}}\left( XY\right) ,\omega^{\frac{1}{2}%
}\left( YX\right) \right\}.  \label{7}
\end{equation*}%
The required result follows from Corollary \ref{corol1}.
\end{proof}
\begin{corollary}
Let
$X, Y\in {\mathbb B}({\mathscr H})$ and $r\geq 1$%
. Then%
\begin{equation*}
\left\Vert X\pm Y^{\ast }\right\Vert ^{r}\leq 2^{2r-2}\left\Vert \left\vert
X\right\vert ^{r}+\left\vert Y^{\ast }\right\vert ^{r}\right\Vert ^{\frac{1}{2}%
}\left\Vert \left\vert Y\right\vert ^{r}+\left\vert X^{\ast
}\right\vert ^{r}\right\Vert ^{\frac{1}{2}}.
\end{equation*}%
In particular, if $X$\ and $Y$\ are normal operators, then%
\begin{equation}
\left\Vert X\pm Y\right\Vert ^{r}\leq 2^{2r-2}\left\Vert \left\vert
X\right\vert ^{r}+\left\vert Y\right\vert ^{r}\right\Vert .  \label{8}
\end{equation}
\end{corollary}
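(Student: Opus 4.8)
The plan is to exhibit $\|X\pm Y^{*}\|$ as twice the norm of the real part of a suitable off-diagonal operator matrix and then bound the numerical radius of that matrix by Corollary~\ref{corol1}. Concretely, I would put
\[
T=\left[\begin{array}{cc} 0 & X \\ \pm Y & 0\end{array}\right]\in\mathbb{B}(\mathscr{H}\oplus\mathscr{H}),
\]
so that $\mathrm{Re}(T)=\tfrac12(T+T^{*})=\tfrac12\left[\begin{array}{cc} 0 & X\pm Y^{*} \\ X^{*}\pm Y & 0\end{array}\right]$. This matrix is self-adjoint and off-diagonal, so its norm equals $\tfrac12\max\{\|X\pm Y^{*}\|,\|X^{*}\pm Y\|\}=\tfrac12\|X\pm Y^{*}\|$. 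Taking $\theta=0$ in Lemma~\ref{1}(a) gives $\omega(T)\ge\|\mathrm{Re}(T)\|=\tfrac12\|X\pm Y^{*}\|$, hence $\omega^{r}(T)\ge 2^{-r}\|X\pm Y^{*}\|^{r}$.

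Next I would apply the first inequality of Corollary~\ref{corol1} with $p=\tfrac12$ to $T$; since the off-diagonal entries of $T$ are $X$ and $\pm Y$, and $|\pm Y|=|Y|$, $|(\pm Y)^{*}|=|Y^{*}|$, this reads
\[
\omega^{r}(T)\le 2^{r-2}\left\||X|^{r}+|Y^{*}|^{r}\right\|^{\frac12}\left\||Y|^{r}+|X^{*}|^{r}\right\|^{\frac12}
\]
(one could equally invoke Theorem~\ref{main1} with $f(t)=g(t)=t^{1/2}$ and $r$ unchanged). Chaining the two estimates, $2^{-r}\|X\pm Y^{*}\|^{r}\le\omega^{r}(T)\le 2^{r-2}\left\||X|^{r}+|Y^{*}|^{r}\right\|^{\frac12}\left\||Y|^{r}+|X^{*}|^{r}\right\|^{\frac12}$, and multiplying through by $2^{r}$ yields the first assertion. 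For the ``in particular'' part I would apply the assertion just proved to the pair $(X,Y^{*})$ in place of $(X,Y)$: since $(Y^{*})^{*}=Y$ and $|(Y^{*})^{*}|=|Y|$, this gives $\|X\pm Y\|^{r}\le 2^{2r-2}\||X|^{r}+|Y|^{r}\|^{\frac12}\||Y^{*}|^{r}+|X^{*}|^{r}\|^{\frac12}$, and normality of $X$ and $Y$, which forces $|X^{*}|=|X|$ and $|Y^{*}|=|Y|$, collapses the right-hand side to $2^{2r-2}\||X|^{r}+|Y|^{r}\|$.

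The only genuinely delicate point is the bookkeeping. One must choose the matrix with lower-left corner $\pm Y$ (not $\pm Y^{*}$), so that $\mathrm{Re}(T)$ produces precisely $X\pm Y^{*}$ in its corner, and one must carry the absolute values correctly through both the sign $\pm$ and the final substitution $Y\mapsto Y^{*}$. Once those are tracked properly, the result is an immediate synthesis of Lemma~\ref{1}(a) and Corollary~\ref{corol1}, with no additional estimation needed.
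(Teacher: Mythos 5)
Your proof is correct and takes essentially the same route as the paper: both identify $\|X\pm Y^{*}\|$ with twice the norm of the real (or imaginary) part of an off-diagonal operator matrix, bound that by $2\omega(T)$ via Lemma \ref{1}(a), and then apply Corollary \ref{corol1} with $p=\tfrac12$. The only cosmetic difference is that you absorb the sign into the lower-left block $\pm Y$ and use only $\mathrm{Re}(T)$ with $\theta=0$, whereas the paper keeps $T=\left[\begin{smallmatrix}0&X\\ Y&0\end{smallmatrix}\right]$ fixed and uses the real part for the $+$ case and the imaginary part for the $-$ case; your explicit substitution $Y\mapsto Y^{*}$ in the normal case is exactly what the paper does implicitly.
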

\begin{proof}
Applying Lemma \ref{1}(a) and  Corollary \ref{corol1} (for $p=\frac{1}{2}$), we
have%
\begin{eqnarray*}
\left\Vert X+Y^{\ast }\right\Vert ^{r} &=&\left\Vert T+T^{\ast }\right\Vert
^{r} \\
&\leq &2^{r}\underset{\theta \in
%TCIMACRO{\U{211d} }%
%BeginExpansion
\mathbb{R}
%EndExpansion
}{\max }\left\Vert \textrm{Re}\left( e^{i\theta }T\right) \right\Vert ^{r} \\
&=&2^{r}\omega^{r}\left( T\right) \\
&\leq &2^{2r-2}\left\Vert \left\vert X\right\vert ^{r}+\left\vert Y^{\ast
}\right\vert ^{r}\right\Vert ^{\frac{1}{2}}\left\Vert \left\vert
Y\right\vert ^{r}+\left\vert X^{\ast }\right\vert ^{r}\right\Vert ^{\frac{1}{%
2}}
\end{eqnarray*}%
where $T=\left[
\begin{array}{cc}
0 & X \\
Y & 0%
\end{array}%
\right] .$ Similarly,
\begin{eqnarray*}
\left\Vert X-Y^{\ast }\right\Vert ^{r} &=&\left\Vert T-T^{\ast }\right\Vert
^{r} \\
&\leq &2^{r}\underset{\theta \in
%TCIMACRO{\U{211d} }%
%BeginExpansion
\mathbb{R}
%EndExpansion
}{\max }\left\Vert \textrm{Im}\left( e^{i\theta }T\right) \right\Vert ^{r} \\
&=&2^{r}\omega^{r}\left( T\right) \\
&\leq &2^{2r-2}\left\Vert \left\vert X\right\vert ^{r}+\left\vert Y^{\ast
}\right\vert ^{r}\right\Vert ^{\frac{1}{2}}\left\Vert \left\vert
Y\right\vert ^{r}+\left\vert X^{\ast }\right\vert ^{r}\right\Vert ^{\frac{1}{%
2}}
\end{eqnarray*}%
Hence we get the desired result.
For the particular case, observe that $\left\vert Y^{\ast }\right\vert =$ $%
\left\vert Y\right\vert $ and $|X^{\ast }|=|X|.$
\end{proof}
\begin{remark}
It should be mentioned here that inequality $\left( \text{\ref{8}}\right) $,
which has been given earlier,\ is a generalized form of\ the well-known
inequality (see \cite{Bour}): if $A$\ and $B$\ are normal operators, then%
\begin{equation}\label{normal}
\left\Vert X+Y\right\Vert \leq \left\Vert \left\vert X\right\vert
+\left\vert Y\right\vert \right\Vert .
\end{equation}%
The normality of \ $X$\ and $Y$\ are necessary that means Inequality \eqref{normal} is not true for arbitrary operators $X$\ and $Y$; see \cite{khal}%
\end{remark}
   \bigskip In the next theorem, we show another upper bound for numerical radius involving  off-diagonal operator matrices.
 \begin{theorem}\label{main11}
Let
$T=\left[\begin{array}{cc}
 0&X\\
 Y&0
 \end{array}\right]\in {\mathbb B}({\mathscr H_1\oplus\mathscr H_2})$,  $r\geq 1$ and $f$, $g$ be nonnegative  continuous  functions on $[0, \infty)$ satisfying the relation $f(t)g(t)=t\,(t\in[0, \infty))$. Then
  \begin{align*}
 \omega^{2r}(T)\leq 4^{r-2}\left(\frac{\left\|\left(f^{2r}(|X|)+g^{2r}(|Y^*|)\right)^p\right\|}{p^2}
 +\frac{\left\|\left(f^{2r}(|Y|)+g^{2r}(|X^*|)\right)^q\right\|}{q^2}\right)
   \end{align*}
   and \begin{align*}
 \omega^{2r}(T)\leq 4^{r-2}\left(\frac{\left\|\left(f^{2r}(|X|)+f^{2r}(|Y^*|)\right)^p\right\|}{p^2}
 +\frac{\left(\left\|g^{2r}(|Y|)+g^{2r}(|X^*|)\right)^q\right\|}{q^2}\right),
   \end{align*}
   where $\frac{1}{p}+\frac{1}{q}=1$ and $p\geq1$.
 \end{theorem}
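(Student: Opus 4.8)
The plan is to mimic the proof of Theorem \ref{main1} up to the point where the triangle inequality and convexity have been applied, and then replace the final applications of Cauchy--Schwarz and the arithmetic--geometric mean inequality by the scalar Young inequality. Concretely, fix a unit vector $\mathbf{x}=\left[\begin{smallmatrix} x_1\\ x_2\end{smallmatrix}\right]\in{\mathscr H}_1\oplus{\mathscr H}_2$, so $\|x_1\|^2+\|x_2\|^2=1$. Following the first four lines of the proof of Theorem \ref{main1}, one gets
\begin{align*}
|\langle T\mathbf{x},\mathbf{x}\rangle|^{2r}\leq 2^{2r-2}\Big(\langle f^{2r}(|X|)x_2,x_2\rangle^{\frac12}\langle g^{2r}(|X^*|)x_1,x_1\rangle^{\frac12}+\langle f^{2r}(|Y|)x_1,x_1\rangle^{\frac12}\langle g^{2r}(|Y^*|)x_2,x_2\rangle^{\frac12}\Big)^2,
\end{align*}
where I have used the bound on $|\langle T\mathbf{x},\mathbf{x}\rangle|^{r}$ from the proof of Theorem \ref{main1} (through the line citing Lemma \ref{3}(a)) and then squared. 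Grouping the two terms inside the big parentheses as $a^{\frac12}b^{\frac12}$ and $c^{\frac12}d^{\frac12}$ with $a=\langle f^{2r}(|X|)x_2,x_2\rangle$, $b=\langle g^{2r}(|X^*|)x_1,x_1\rangle$, $c=\langle f^{2r}(|Y|)x_1,x_1\rangle$, $d=\langle g^{2r}(|Y^*|)x_2,x_2\rangle$, I then want to bound $(a^{\frac12}b^{\frac12}+c^{\frac12}d^{\frac12})^2=ab+cd+2(ab)^{\frac12}(cd)^{\frac12}$ and regroup so that the quantities $a+d$ and $b+c$ appear.

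The key regrouping step is to write $(a^{\frac12}b^{\frac12}+c^{\frac12}d^{\frac12})^2\leq (a+d)(b+c)$, which is just the Cauchy--Schwarz inequality $(a^{\frac12}b^{\frac12}+d^{\frac12}c^{\frac12})^2\leq (a+d)(b+c)$ applied to the vectors $(a^{\frac12},d^{\frac12})$ and $(b^{\frac12},c^{\frac12})$; this is precisely the Cauchy--Schwarz step already used in the proof of Theorem \ref{main1}. Hence
\begin{align*}
|\langle T\mathbf{x},\mathbf{x}\rangle|^{2r}\leq 2^{2r-2}\,\langle(f^{2r}(|X|)+g^{2r}(|Y^*|))x_2,x_2\rangle\,\langle(f^{2r}(|Y|)+g^{2r}(|X^*|))x_1,x_1\rangle.
\end{align*}
Now I apply the scalar Young inequality $\alpha\beta\leq\frac{\alpha^p}{p}+\frac{\beta^q}{q}$ with $\alpha=\langle(f^{2r}(|X|)+g^{2r}(|Y^*|))x_2,x_2\rangle$ and $\beta=\langle(f^{2r}(|Y|)+g^{2r}(|X^*|))x_1,x_1\rangle$, together with Lemma \ref{3}(a) in the form $\langle Sx,x\rangle^p\leq\langle S^p x,x\rangle$ for $p\geq1$ and $\|x\|\leq1$ (valid since $\|x_1\|,\|x_2\|\leq1$), to obtain
\begin{align*}
|\langle T\mathbf{x},\mathbf{x}\rangle|^{2r}\leq 2^{2r-2}\left(\frac{\langle(f^{2r}(|X|)+g^{2r}(|Y^*|))^p x_2,x_2\rangle}{p}+\frac{\langle(f^{2r}(|Y|)+g^{2r}(|X^*|))^q x_1,x_1\rangle}{q}\right).
\end{align*}
Bounding each inner product by the corresponding operator norm times $\|x_2\|^2$ (resp.\ $\|x_1\|^2$) and then using $\|x_1\|^2\leq 1$, $\|x_2\|^2\leq1$ would give a bound with $1/p$ and $1/q$ rather than the claimed $1/p^2$ and $1/q^2$; to recover the sharper constants I instead note that $\|x_1\|^2+\|x_2\|^2=1$ forces $\|x_2\|^2\leq 1$ and separately use a second Young-type splitting, or more directly I bound $\|x_2\|^2\cdot\frac1p$ and $\|x_1\|^2\cdot\frac1q$ and apply Young once more to $\|x_2\|^{2/p}\|x_1\|^{2/q}\leq\frac{\|x_2\|^2}{p}+\frac{\|x_1\|^2}{q}$ — this is the delicate bookkeeping point. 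Taking the supremum over unit vectors $\mathbf{x}$ then yields
\begin{align*}
\omega^{2r}(T)\leq 4^{r-2}\left(\frac{\big\|(f^{2r}(|X|)+g^{2r}(|Y^*|))^p\big\|}{p^2}+\frac{\big\|(f^{2r}(|Y|)+g^{2r}(|X^*|))^q\big\|}{q^2}\right),
\end{align*}
which is the first claimed inequality. For the second inequality I repeat the argument verbatim, but starting from the alternative splitting \eqref{main1eq} used at the end of the proof of Theorem \ref{main1} (which pairs $f^2(|X|)$ with $f^2(|Y^*|)$ and $g^2(|Y|)$ with $g^2(|X^*|)$), so that the regrouped sums become $f^{2r}(|X|)+f^{2r}(|Y^*|)$ and $g^{2r}(|Y|)+g^{2r}(|X^*|)$.

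The main obstacle I anticipate is getting the constant exactly right: the naive chain of inequalities produces the factor $2^{2r-2}/p$ and $2^{2r-2}/q$ rather than $4^{r-2}/p^2=2^{2r-4}/p^2$ and $4^{r-2}/q^2$, so one extra factor of $\frac12\cdot\frac1p$ (resp.\ $\frac12\cdot\frac1q$) must be squeezed out. The way to do this is to carry the factor $\|x_2\|^2$ (resp.\ $\|x_1\|^2$) alongside $\frac1p$ (resp.\ $\frac1q$) in the last display before taking the supremum, then apply the scalar Young inequality a second time to $\|x_2\|^{2}\cdot 1$ paired against $\|x_1\|^{2}\cdot 1$ — or, cleanly, observe that since $\|x_1\|^2+\|x_2\|^2=1$ one has $\frac{\|x_2\|^2}{p}+\frac{\|x_1\|^2}{q}\leq\max\{\frac1p,\frac1q\}$ is not quite enough, so instead one should factor $\frac12$ out of the $2^{2r-2}=\frac14\cdot 2^{2r}$ and absorb the remaining $\frac1p,\frac1q$ into $\frac1{p^2},\frac1{q^2}$ using the A-G step $\|x_1\|\|x_2\|\leq\frac12$ exactly as in Theorem \ref{main1}, applied before invoking Young. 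I expect the bookkeeping to close once these two normalization steps ($\|x_1\|\|x_2\|\le\frac12$ and the scalar Young inequality with exponents $p,q$) are interleaved in the correct order; everything else is a direct transcription of the techniques already established in Lemmas \ref{3} and \ref{5} and in the proof of Theorem \ref{main1}.
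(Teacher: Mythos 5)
Your reduction through the triangle inequality, convexity, Lemma \ref{5}, Lemma \ref{3}(a) and Cauchy--Schwarz is fine, but the point you set aside as ``delicate bookkeeping'' is in fact the missing idea, and neither of your proposed patches closes it. Because you square \emph{before} invoking Young, you arrive at $|\langle T\mathbf{x},\mathbf{x}\rangle|^{2r}\le 2^{2r-2}\bigl(\tfrac{1}{p}\langle S_1^p x_2,x_2\rangle+\tfrac{1}{q}\langle S_2^q x_1,x_1\rangle\bigr)$, where $S_1=f^{2r}(|X|)+g^{2r}(|Y^*|)$ and $S_2=f^{2r}(|Y|)+g^{2r}(|X^*|)$, and the best norm bound this yields is $2^{2r-2}\bigl(\tfrac{\|S_1^p\|}{p}\|x_2\|^2+\tfrac{\|S_2^q\|}{q}\|x_1\|^2\bigr)$, whose supremum over $\|x_1\|^2+\|x_2\|^2=1$ is $2^{2r-2}\max\{\|S_1^p\|/p,\;\|S_2^q\|/q\}$ --- a bound carrying only first powers of $p,q$, which is in general much larger than the asserted $4^{r-2}\bigl(\|S_1^p\|/p^2+\|S_2^q\|/q^2\bigr)$, and no further Young step can convert such a maximum into that sum. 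In particular, your suggested second application $\|x_2\|^{2/p}\|x_1\|^{2/q}\le \|x_2\|^2/p+\|x_1\|^2/q$ involves a quantity that never appears in your chain, and applying $\|x_1\|\|x_2\|\le 1/2$ before Young simply reproduces the square of Theorem \ref{main1}, not the stated inequality.

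The paper's proof avoids this by never squaring: it keeps $|\langle T\mathbf{x},\mathbf{x}\rangle|^{r}$, applies the scalar Young inequality at the level of the half powers, $\langle S_1x_2,x_2\rangle^{1/2}\langle S_2x_1,x_1\rangle^{1/2}\le \tfrac1p\langle S_1x_2,x_2\rangle^{p/2}+\tfrac1q\langle S_2x_1,x_1\rangle^{q/2}$, then uses Lemma \ref{3}(a) and bounds by operator norms while retaining the \emph{first} powers of $\|x_1\|,\|x_2\|$, giving $|\langle T\mathbf{x},\mathbf{x}\rangle|^{r}\le 2^{r-1}(\alpha\|x_2\|+\beta\|x_1\|)$ with $\alpha=\|S_1^p\|^{1/2}/p$ and $\beta=\|S_2^q\|^{1/2}/q$. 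The denominators $p^2,q^2$ then arise from the exact maximization $\max_{\|x_1\|^2+\|x_2\|^2=1}(\alpha\|x_2\|+\beta\|x_1\|)=\sqrt{\alpha^2+\beta^2}$, which is what squares $\alpha$ and $\beta$; this Pythagorean step, not a second Young inequality, is what your argument lacks, and it cannot be recovered once you have passed to $\|x_1\|^2,\|x_2\|^2$. (As an aside, tracking the constant through the paper's own argument gives $\omega^{2r}(T)\le 4^{r-1}(\cdots)$ rather than $4^{r-2}(\cdots)$, so the prefactor in the statement seems stronger than what the proof delivers; but that discrepancy is independent of the gap in your proposal.)
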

 \begin{proof}
If $\mathbf{x}=\left[\begin{array}{cc}
 x_1\\
 x_2
 \end{array}\right] \in {\mathscr H_1\oplus\mathscr H_2}$ is a unit vector, then by a similar argument to the proof of Theorem
 \ref{main1} we have
 \begin{align}\label{remark1}
 &|\left\langle T\mathbf{x}, \mathbf{x} \right\rangle |^{r}\nonumber\\&
 =|\left\langle Xx_2, x_1 \right\rangle+\left\langle Yx_1, x_2 \right\rangle |^{r}\nonumber\\&
 \leq\left(|\left\langle Xx_2, x_1 \right\rangle|+|\left\langle Yx_1, x_2 \right\rangle |\right)^{r} \qquad (\textrm {by the triangular inequality})\nonumber\\&
 \leq\frac{2^r}{2}\left(|\left\langle Xx_2, x_1 \right\rangle|^r+|\left\langle Yx_1, x_2 \right\rangle |^{r}\right)
 \qquad (\textrm {by the convexity\,} f(t)=t^r)\nonumber\\&
 \leq\frac{2^r}{2}\Big(\left(\left\langle f^2(|X|)x_2, x_2 \right\rangle^\frac{1}{2}\left\langle g^2(|X^*|)x_1, x_1 \right\rangle^\frac{1}{2}\right)^r
 \nonumber\\&\qquad+\left(\left\langle f^2(|Y|)x_1, x_1 \right\rangle^\frac{1}{2}\left\langle g^2(|Y^*|)x_2, x_2 \right\rangle^\frac{1}{2} \right)^{r}\Big)
\qquad(\textrm {by Lemma\,\,}\ref{5})\nonumber\\&\leq\frac{2^r}{2}\left(\left\langle f^{2r}(|X|)x_2, x_2 \right\rangle^\frac{1}{2}\left\langle g^{2r}|X^*|x_1, x_1 \right\rangle^\frac{1}{2}
 +\left\langle f^{2r}(|Y|)x_1, x_1 \right\rangle^\frac{1}{2}\left\langle g^{2r}(|Y^*|)x_2, x_2 \right\rangle^\frac{1}{2}\right)\nonumber\\&
 \qquad\qquad\qquad\qquad\qquad\qquad\qquad\qquad\qquad\qquad (\textrm {by Lemma\,\,\ref{3}(a)})\nonumber\\&
 \leq\frac{2^r}{2}\left(\left\langle f^{2r}(|X|)x_2, x_2 \right\rangle+\left\langle g^{2r}(|Y^*|)x_2, x_2 \right\rangle\right)^\frac{1}{2}\nonumber
 \\&\,\,\,\times\left(\left\langle f^{2r}(|Y|)x_1, x_1 \right\rangle+\left\langle g^{2r}(|X^*|)x_1, x_1 \right\rangle\right)^\frac{1}{2}
  \,\,(\textrm {by the Cauchy-Schwarz inequality})\nonumber\\&
 =\frac{2^r}{2}\left\langle (f^{2r}(|X|)+g^{2r}(|Y^*|))x_2, x_2 \right\rangle^\frac{1}{2} \left\langle (f^{2r}(|Y|)+g^{2r}(|X^*|))x_1, x_1 \right\rangle^\frac{1}{2} \nonumber\\&
  \leq\frac{2^r}{2}\left(\frac{\left\langle (f^{2r}(|X|)+g^{2r}(|Y^*|))x_2, x_2 \right\rangle^\frac{p}{2}}{p}
  +\frac{ \left\langle (f^{2r}(|Y|)+g^{2r}(|X^*|))x_1, x_1 \right\rangle^\frac{q}{2}}{q}\right)\nonumber\\&
  \qquad\qquad\qquad\qquad\qquad\qquad\qquad\qquad\qquad\qquad (\textrm {by the Young inequality}) \nonumber\\&
  \leq\frac{2^r}{2}\left(\frac{\left\langle (f^{2r}(|X|)+g^{2r}(|Y^*|))^px_2, x_2 \right\rangle^\frac{1}{2}}{p}
  +\frac{ \left\langle (f^{2r}(|Y|)+g^{2r}(|X^*|))^qx_1, x_1 \right\rangle^\frac{1}{2}}{q}\right)\nonumber\\&
  \qquad\qquad\qquad\qquad\qquad\qquad\qquad\qquad\qquad\qquad(\textrm {by Lemma\,\,\ref{3}(a)}) \nonumber \\&\leq\frac{2^r}{2}\left(\frac{\left\|\left(f^{2r}(|X|)+g^{2r}(|Y^*|)\right)^p\right\|^\frac{1}{2}}{p}\|x_2\|
  +\frac{ \left\|\left(f^{2r}(|Y|)+g^{2r}(|X^*|)\right)^q\right\|^\frac{1}{2}}{q}\|x_1\|\right).
  \end{align}
  Let $\alpha=\frac{\left\|\left(f^{2r}(|X|)+g^{2r}(|Y^*|)\right)^p\right\|^\frac{1}{2}}{p}$ and $\beta=\frac{\left\|\left(f^{2r}(|Y|)+g^{2r}(|X^*|)\right)^q\right\|^\frac{1}{2}}{q}$. It follows from $$\underset{%TCIMACRO{\U{211d} }%%BeginExpansion
  \|x_1\|^2+\|x_2\|^2=1
  %EndExpansion
}{\max}(\alpha\|x_1\|+\beta\|x_2\|)=\underset{\theta \in%TCIMACRO{\U{211d} }%%BeginExpansion
  [0,2\pi]
  %EndExpansion
}{\max}(\alpha\sin\theta+\beta\cos\theta)=\sqrt{\alpha^2+\beta^2}$$ and Inequality \eqref{remark1} that we deduce
  \begin{align*}
 &|\left\langle T\mathbf{x}, \mathbf{x} \right\rangle |^{r}\nonumber\\&
  \leq\frac{2^r}{2}\left(\frac{\left\|\left(f^{2r}(|X|)+g^{2r}(|Y^*|)\right)^p\right\|}{p^2}
  +\frac{ \left\|\left(f^{2r}(|Y|)+g^{2r}(|X^*|)\right)^q\right\|}{q^2}\right)^\frac{1}{2}.
 \end{align*}
Taking the supremum over all unit vectors $\mathbf{x}\in \mathscr {H}_1\oplus \mathscr {H}_2$ we get the first inequality. Now, according to inequality \eqref{main1eq}
  and the same argument in the proof of the first inequality, we obtain the second inequality.
\end{proof}

  \begin{remark}
 If $T=\left[\begin{array}{cc}
 0&X\\
 Y&0
 \end{array}\right]\in {\mathbb B}({\mathscr H_1\oplus\mathscr H_2})$ and $\frac{1}{p}+\frac{1}{q}=1$, then by using Theorem \ref{main1} and the Young inequality we obtain the  inequalities
 \begin{align*}
 \omega^{r}(T)\leq 2^{r-2}\left(\frac{\left\|f^{2r}(|X|)+g^{2r}(|Y^*|)\right\|^\frac{p}{2}}{p}+\frac{\left\|f^{2r}(|Y|)+g^{2r}(|X^*|)\right\|^\frac{q}{2}}{q}\right)
   \end{align*}
   and \begin{align*}
 \omega^{r}(T)\leq 2^{r-2}\left(\frac{\left\|f^{2r}(|X|)+f^{2r}(|Y^*|)\right\|^\frac{p}{2}}{p}+\frac{\left\|g^{2r}(|Y|)+g^{2r}(|X^*|)\right\|^\frac{q}{2}}{q}\right),
   \end{align*}
   where   $r\geq 1$ and $f$, $g$ are nonnegative  continuous  functions on $[0, \infty)$ satisfying the relation $f(t)g(t)=t\,(t\in[0, \infty))$. Now,  Theorem \ref{main11} shows some other upper bounds for $\omega(T)$.
\end{remark}
\bigskip In the special case of Theorem \ref{main11} for $Y=X$ and $p=q=2$, we have the next result.
\begin{corollary}
Let
$X\in {\mathbb B}({\mathscr H})$,  $r\geq 1$ and $f$, $g$ be nonnegative  continuous  functions on $[0, \infty)$ satisfying the relation $f(t)g(t)=t\,(t\in[0, \infty))$. Then
\begin{align*}
\omega^{2r}(X)\leq 2^{2r-3}\left\|\left(f^{2r}(|X|)+g^{2r}(|X^*|)\right)^2\right\|
\end{align*}
and \begin{align*}
\omega^{2r}(T)\leq 2^{2r-4}\left(\left\|\left(f^{2r}(|X|)+f^{2r}(|X^*|)\right)^2\right\|+\left(\left\|g^{2r}(|X|)+g^{2r}(|X^*|)\right)^2\right\|\right).
\end{align*}
\end{corollary}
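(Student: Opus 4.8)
The plan is to obtain both displayed inequalities as the special case $Y=X$, $p=q=2$ of Theorem \ref{main11}, and then to replace $\omega^{2r}(T)$ by $\omega^{2r}(X)$ using Lemma \ref{1}(b). Indeed, when $Y=X$ the off-diagonal matrix becomes $T=\left[\begin{array}{cc}0&X\\X&0\end{array}\right]$, and Lemma \ref{1}(b) gives $\omega(T)=\omega(X)$, hence $\omega^{2r}(T)=\omega^{2r}(X)$. So it suffices to specialize the two bounds of Theorem \ref{main11} and simplify the constants.

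For the first inequality, put $Y=X$ into the first bound of Theorem \ref{main11}. Both operator sums $f^{2r}(|X|)+g^{2r}(|Y^*|)$ and $f^{2r}(|Y|)+g^{2r}(|X^*|)$ then reduce to the single positive operator $f^{2r}(|X|)+g^{2r}(|X^*|)$, so with $p=q=2$ the two summands on the right coincide and combine into one term carrying the factor $\frac1{p^2}+\frac1{q^2}=\frac12$; collecting the numerical constants produces the first displayed bound in terms of $\bigl\|\bigl(f^{2r}(|X|)+g^{2r}(|X^*|)\bigr)^2\bigr\|$. For the second inequality, put $Y=X$ and $p=q=2$ into the second bound of Theorem \ref{main11}; now the two operator sums become $f^{2r}(|X|)+f^{2r}(|X^*|)$ and $g^{2r}(|X|)+g^{2r}(|X^*|)$, which in general are distinct operators and therefore do not merge, so factoring the common numerical constant out of the two terms yields the second displayed inequality.

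There is no genuine obstacle here: the corollary is a direct specialization. The only points needing attention are the bookkeeping of the powers of $2$ and the observation that in the first bound of Theorem \ref{main11} the two norm factors coincide once $Y=X$ — so that, at the symmetric exponents $p=q=2$, the Young-type splitting in that proof contributes exactly the factor $\tfrac12$ and is \emph{not} wasteful — whereas in the second bound the two factors stay separate. If one wishes to avoid citing Theorem \ref{main11}, an equivalent route is to re-run the chain of estimates in its proof with $Y=X$ substituted from the outset and the Young inequality applied at $p=q=2$, which reproduces the same two bounds directly; in that self-contained version the constant again comes out by combining $\frac{2^r}{2}$ from the triangle/convexity steps with the $\frac12$ from the Young step and then squaring after taking the supremum over unit vectors.
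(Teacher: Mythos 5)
Your proposal is correct and follows exactly the paper's route: the corollary is the specialization of Theorem \ref{main11} to $Y=X$ and $p=q=2$, with Lemma \ref{1}(b) used to replace $\omega(T)$ by $\omega(X)$. One bookkeeping caveat: substituting the theorem's stated constant $4^{r-2}$ literally yields the even smaller constants $2^{2r-5}$ and $2^{2r-6}$ (the corollary's $2^{2r-3}$ and $2^{2r-4}$ correspond to the constant $4^{r-1}$ that the proof of Theorem \ref{main11} actually delivers), so the displayed bounds follow a fortiori, and your self-contained rerun of the argument with $Y=X$, $p=q=2$ reproduces the corollary's constants exactly.
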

 \bigskip Applying Inequality \eqref{12} we obtain the following theorem.
\begin{theorem}\label{main3}
Let
$T=\left[\begin{array}{cc}
 0&X\\
 Y&0
 \end{array}\right]\in {\mathbb B}({\mathscr H_1\oplus\mathscr H_2})$ and  $f$, $g$ be nonnegative  continuous  functions on $[0, \infty)$  satisfying the relation $f(t)g(t)=t$ $(t\in [0, \infty))$. Then for $r\geq 1$
{\footnotesize\begin{align*}
 \omega^{r}(T)\leq 2^{r-2}\left(\left\|f^{2r}(|X|)+g^{2r}(|Y^*|)\right\|+\left\|f^{2r}(|Y|)+g^{2r}(|X^*|)\right\|\right)-2^{r-2}\inf_{\|(x_1,x_2)\|=1} \zeta (x_1,x_2),
 \end{align*}}
 where
 {\footnotesize\begin{align*}
 \zeta (x_1,x_2)=\left(\left\langle \left(f^{2r}(|X|)+g^{2r}(|Y^*|)\right)x_2,x_2\right\rangle^\frac{1}{2}-\left\langle \left(f^{2r}(|Y|)+g^{2r}(|X^*|)\right)x_1,x_1\right\rangle^\frac{1}{2}\right)^2.
 \end{align*}}
 \end{theorem}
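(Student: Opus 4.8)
The strategy is to copy the proof of Theorem~\ref{main1} line by line up to the Cauchy--Schwarz step, and then, rather than passing at once to operator norms, to insert the refined Young inequality \eqref{12} with $m=1$.

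So fix a unit vector $\mathbf{x}=(x_1,x_2)\in\mathscr{H}_1\oplus\mathscr{H}_2$, i.e.\ $\|x_1\|^2+\|x_2\|^2=1$; in particular $\|x_1\|\le 1$ and $\|x_2\|\le 1$, so Lemma~\ref{3}(a) applies to $x_1$ and $x_2$. Exactly as in the proof of Theorem~\ref{main1} --- triangle inequality applied to $\langle T\mathbf{x},\mathbf{x}\rangle=\langle Xx_2,x_1\rangle+\langle Yx_1,x_2\rangle$, convexity of $t\mapsto t^{r}$, Lemma~\ref{5}, Lemma~\ref{3}(a), and finally the Cauchy--Schwarz inequality arranged so that the $x_2$-factors and the $x_1$-factors are grouped together --- one arrives at
\begin{align*}
|\langle T\mathbf{x},\mathbf{x}\rangle|^{r}\leq\frac{2^{r}}{2}\,a^{\frac12}b^{\frac12},
\end{align*}
where $a:=\left\langle\bigl(f^{2r}(|X|)+g^{2r}(|Y^{*}|)\bigr)x_2,x_2\right\rangle$ and $b:=\left\langle\bigl(f^{2r}(|Y|)+g^{2r}(|X^{*}|)\bigr)x_1,x_1\right\rangle$. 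Note that $a,b\geq 0$ (the operators in question are positive) and that, by definition, $\zeta(x_1,x_2)=\bigl(a^{1/2}-b^{1/2}\bigr)^{2}$.

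Now I would apply \eqref{12} with $m=1$ to the nonnegative scalars $a$ and $b$, which reads $a^{1/2}b^{1/2}+\tfrac12\bigl(a^{1/2}-b^{1/2}\bigr)^{2}\leq\tfrac12(a+b)$, equivalently $a^{1/2}b^{1/2}\leq\tfrac12(a+b)-\tfrac12\,\zeta(x_1,x_2)$. Substituting into the previous display and using $2^{r}/4=2^{r-2}$ gives
\begin{align*}
|\langle T\mathbf{x},\mathbf{x}\rangle|^{r}\leq 2^{r-2}(a+b)-2^{r-2}\,\zeta(x_1,x_2).
\end{align*}
Finally, $a\leq\|f^{2r}(|X|)+g^{2r}(|Y^{*}|)\|\,\|x_2\|^{2}$ and $b\leq\|f^{2r}(|Y|)+g^{2r}(|X^{*}|)\|\,\|x_1\|^{2}$; since $\|x_1\|^{2}+\|x_2\|^{2}=1$ and both norms are nonnegative, $a+b\leq\|f^{2r}(|X|)+g^{2r}(|Y^{*}|)\|+\|f^{2r}(|Y|)+g^{2r}(|X^{*}|)\|$, while $\zeta(x_1,x_2)\geq\inf_{\|(x_1,x_2)\|=1}\zeta(x_1,x_2)$. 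Taking the supremum over all unit vectors $\mathbf{x}$ (which turns the left-hand side into $\omega^{r}(T)$, since $t\mapsto t^{r}$ is increasing) yields exactly the claimed inequality.

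I do not anticipate a genuine obstacle here: this is a direct adaptation of Theorem~\ref{main1} combined with the idea behind the $\omega_{p}$-bound of \cite{sheikh} recalled in the introduction. The only two delicate points are (i) that the subtracted term survives the supremum --- legitimate because $\zeta(x_1,x_2)\geq\inf\zeta$ holds for every unit vector individually, so the estimate passes to the supremum unchanged --- and (ii) that the bound $a+b\leq\|f^{2r}(|X|)+g^{2r}(|Y^{*}|)\|+\|f^{2r}(|Y|)+g^{2r}(|X^{*}|)\|$ really uses the normalization $\|x_1\|^{2}+\|x_2\|^{2}=1$, not merely $\|x_i\|\le 1$.
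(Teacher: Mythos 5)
Your proposal is correct and follows essentially the same route as the paper: the chain of triangle inequality, convexity, Lemma~\ref{5}, Lemma~\ref{3}(a) and Cauchy--Schwarz exactly as in Theorem~\ref{main1}, then the refined Young inequality \eqref{12} with $m=1$ applied to the scalars $a,b$, then passing to operator norms and taking the supremum with the $\inf\zeta$ term retained. Your cautionary point (ii) is actually unnecessary --- $\|x_1\|\le 1$ and $\|x_2\|\le 1$ already give $a+b\le\left\|f^{2r}(|X|)+g^{2r}(|Y^*|)\right\|+\left\|f^{2r}(|Y|)+g^{2r}(|X^*|)\right\|$ --- but this does not affect the validity of the argument.
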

 \begin{proof}
 Let $\mathbf{x}=\left[\begin{array}{cc}
 x_1\\
 x_2
 \end{array}\right]
 \in {\mathscr H_1\oplus\mathscr H_2}$ be  a unit vector. Then
   {\begin{align*}
 |\langle T\mathbf{x}&, \mathbf{x}\rangle|^{r}
 \\& = |\langle Xx_2,x_1\rangle+\langle Yx_1, x_2\rangle|^{r}
 \\&\leq\left(|\langle Xx_2,x_1\rangle|+|\langle Yx_1, x_2\rangle|\right)^{r}\qquad(\textrm {by the triangular inequality})
 \\&\leq\frac{2^r}{2}\left(|\langle Xx_2,x_1\rangle|^r+|\langle Yx_1, x_2\rangle|^r\right)\qquad(\textrm {by the convexity\,} f(t)=t^r)
    \\&\leq\frac{2^r}{2}\left(\langle f^2(|X|)x_2,x_2\rangle^\frac{r}{2}\langle g^2(|X^*|)x_1,x_1\rangle^\frac{r}{2}
  +\langle f^2(|Y|)x_1,x_1\rangle^\frac{r}{2}\langle f^2(|Y^*|)x_2,x_2\rangle^\frac{r}{2}\right)
  \\&\qquad\qquad\qquad\qquad\qquad\qquad\qquad\qquad(\textrm {by Lemma\,\,}\ref{5})
  \\&\leq\frac{2^r}{2}\left(\langle f^{2r}(|X|)x_2,x_2\rangle^\frac{1}{2}\langle g^{2r}(|X^*|)x_1,x_1\rangle^\frac{1}{2}
  +\langle f^{2r}(|Y|)x_1,x_1\rangle^\frac{1}{2}\langle g^{2r}(|Y^*|)x_2,x_2\rangle^\frac{1}{2}\right)
  \\&\leq\frac{2^r}{2}\left(\langle f^{2r}(|X|)x_2,x_2\rangle+\langle g^{2r}(|Y^*|)x_2,x_2\rangle\right)^\frac{1}{2}\left(f^{2r}(|Y|)x_1,x_1\rangle+\langle g^{2r}(|X^*|)x_1,x_1\rangle\right)^\frac{1}{2}
  \\&=\frac{2^r}{2}\langle \left(f^{2r}(|X|)+g^{2r}(|Y^*|)\right)x_2,x_2\rangle^\frac{1}{2}\langle \left(f^{2r}(|Y|)+g^{2r}(|X^*|)\right)x_1,x_1\rangle^\frac{1}{2}\\&
  \leq\frac{2^r}{4}\left(\langle \left(f^{2r}(|X|)+g^{2r}(|Y^*|)\right)x_2,x_2\rangle+\langle \left(f^{2r}(|Y|)+g^{2r}(|X^*|)\right)x_1,x_1\rangle\right)\\&
  \,\,\,\,\,-\frac{2^r}{4}\left(\langle \left(f^{2r}(|X|)+g^{2r}(|Y^*|)\right)x_2,x_2\rangle^\frac{1}{2}-\langle \left(f^{2r}(|Y|)+g^{2r}(|X^*|)\right)x_1,x_1\rangle^\frac{1}{2}\right)^2\\&
  \qquad\qquad\qquad\qquad\qquad\qquad\qquad\qquad(\textrm {by Inequality\,\,}\eqref{12})\\&
  \leq\frac{2^r}{4}\left(\left\|f^{2r}(|X|)+g^{2r}(|Y^*|)\right\|+\left\|f^{2r}(|Y|)+g^{2r}(|X^*|)\right\|\right)\\&
  \,\,\,\,\,-\frac{2^r}{4}\left(\langle \left(f^{2r}(|X|)+g^{2r}(|Y^*|)\right)x_2,x_2\rangle^\frac{1}{2}-\langle \left(f^{2r}(|Y|)+g^{2r}(|X^*|)\right)x_1,x_1\rangle^\frac{1}{2}\right)^2.
  \end{align*}}
 Taking the supremum over all unit vectors $\mathbf{x}=\left[\begin{array}{cc}
 x_1\\
 x_2
 \end{array}\right]
 \in {\mathscr H_1\oplus\mathscr H_2}$ we  get the desired inequality.
 \end{proof}
 \bigskip If we put $Y=X$ in Theorem \ref{main3}, then we get next result.
 \begin{corollary}
Let $X\in {\mathbb B}({\mathscr H})$ and  $f$, $g$ be nonnegative  continuous  functions on $[0, \infty)$  satisfying the relation $f(t)g(t)=t$ $(t\in [0, \infty))$. Then for $r\geq 1$
{\footnotesize\begin{align*}
 \omega^{r}(X)\leq 2^{r-1}\|f^{2r}(|X|)+g^{2r}(|X^*|)\|-2^{r-2}\inf_{\|(x_1,x_2)\|=1} \zeta (x_1,x_2),
 \end{align*}}
 where
 {\footnotesize\begin{align*}
 \zeta (x_1,x_2)=\left(\left\langle \left(f^{2r}(|X|)+g^{2r}(|X^*|)\right)x_2,x_2\right\rangle^\frac{1}{2}-\left\langle \left(f^{2r}(|X|)+g^{2r}(|X^*|)\right)x_1,x_1\right\rangle^\frac{1}{2}\right)^2.
 \end{align*}}
 \end{corollary}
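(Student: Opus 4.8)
The plan is to obtain this Corollary as a direct specialization of Theorem~\ref{main3}. First I would put $Y=X$ in Theorem~\ref{main3}, so that the operator matrix becomes $T=\left[\begin{array}{cc} 0&X\\ X&0\end{array}\right]\in{\mathbb B}({\mathscr H}\oplus{\mathscr H})$. By Lemma~\ref{1}(b) we have $\omega(T)=\omega(X)$, hence $\omega^{r}(T)=\omega^{r}(X)$, which handles the left-hand side of the asserted inequality.

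Next I would simplify the right-hand side using the elementary identities $|Y|=|X|$ and $|Y^{*}|=|X^{*}|$, valid when $Y=X$. Under this substitution the two operators appearing inside the norms in Theorem~\ref{main3}, namely $f^{2r}(|X|)+g^{2r}(|Y^{*}|)$ and $f^{2r}(|Y|)+g^{2r}(|X^{*}|)$, both collapse to the single operator $f^{2r}(|X|)+g^{2r}(|X^{*}|)$. Consequently the term $2^{r-2}\big(\|f^{2r}(|X|)+g^{2r}(|Y^{*}|)\|+\|f^{2r}(|Y|)+g^{2r}(|X^{*}|)\|\big)$ becomes $2^{r-2}\cdot 2\,\|f^{2r}(|X|)+g^{2r}(|X^{*}|)\|=2^{r-1}\|f^{2r}(|X|)+g^{2r}(|X^{*}|)\|$, and the quadratic defect
\[
\zeta(x_1,x_2)=\left(\langle(f^{2r}(|X|)+g^{2r}(|Y^{*}|))x_2,x_2\rangle^{\frac12}-\langle(f^{2r}(|Y|)+g^{2r}(|X^{*}|))x_1,x_1\rangle^{\frac12}\right)^2
\]
turns into exactly the $\zeta$ recorded in the statement of the Corollary. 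Combining these two observations yields the claimed bound.

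There is essentially no hard step here; the only point that requires a moment's care is that in Theorem~\ref{main3} the symbols $X$ and $Y$ enter the two norm expressions in a ``crossed'' fashion, so one must check that the substitution $Y=X$ really does make both expressions coincide — which it does, precisely because $|Y|=|X|$ and $|Y^{*}|=|X^{*}|$. After that the infimum term is carried over verbatim (the supremum is still taken over all unit vectors $\mathbf{x}=(x_1,x_2)$ with $\|x_1\|^2+\|x_2\|^2=1$, exactly as in the proof of Theorem~\ref{main3}), and the constant bookkeeping $2^{r-2}+2^{r-2}=2^{r-1}$ completes the argument.
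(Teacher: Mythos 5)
Your proposal is correct and coincides with the paper's own derivation: the corollary is obtained precisely by setting $Y=X$ in Theorem~\ref{main3}, using Lemma~\ref{1}(b) to identify $\omega(T)=\omega(X)$, and noting that both norm expressions collapse to $f^{2r}(|X|)+g^{2r}(|X^*|)$, so that $2^{r-2}+2^{r-2}=2^{r-1}$ while the defect term $\zeta$ carries over verbatim.
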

 \begin{remark}
 If
 $\mathbf{x}=\left[\begin{array}{cc}
 x_1\\
 x_2
 \end{array}\right]
 \in {\mathscr H_1\oplus\mathscr H_2}$ is a unit vector, then by using the inequality
 \begin{align*}
 |\langle T\mathbf{x}&, \mathbf{x}\rangle|^{r}
 \\& = |\left\langle Xx_2,x_1\right\rangle+\left\langle Yx_1, x_2\right\rangle|^{r}
 \\&\leq\left(|\left\langle Xx_2,x_1\right\rangle|+|\left\langle Yx_1, x_2\right\rangle|\right)^{r}
 \\&\leq\frac{2^r}{2}\left(|\left\langle Xx_2,x_1\right\rangle|^r+|\left\langle Yx_1, x_2\right\rangle|^r\right)
    \\&\leq\frac{2^r}{2}\left(\left\langle f^2(|X|)x_2,x_2\right\rangle^\frac{r}{2}\left\langle g^2(|X^*|)x_1,x_1\right\rangle^\frac{r}{2}\left\langle g^2(|X|)x_2,x_2\right\rangle^\frac{r}{2}\left\langle f^2(|X^*|)x_1,x_1\right\rangle^\frac{r}{2}\right)
    \end{align*}
     and the same argument in the proof if Theorem \ref{main3} we get the following inequality
 {\footnotesize\begin{align*}
 \omega^{r}(T)\leq \frac{2^r}{4}\left(\|f^{2r}(|X|)+f^{2r}(|Y^*|)\|+\|g^{2r}(|Y|)+g^{2r}(|X^*|)\|\right)-\frac{2^r}{4}\inf_{\|(x_1,x_2)\|=1} \zeta (x_1,x_2),
 \end{align*}}
 where
$T=\left[\begin{array}{cc}
 0&X\\
 Y&0
 \end{array}\right]\in {\mathbb B}({\mathscr H_1\oplus\mathscr H_2})$,   $f$, $g$ are nonnegative  continuous  functions on $[0, \infty)$  satisfying the relation $f(t)g(t)=t$ $(t\in [0, \infty))$, $r\geq 1$ and
 {\footnotesize\begin{align*}
 \zeta (x_1,x_2)=\left(\left\langle \left(f^{2r}(|X|)+f^{2r}(|Y^*|)\right)x_2,x_2\right\rangle^\frac{1}{2}-\left\langle \left(g^{2r}(|Y|)+g^{2r}(|X^*|)\right)x_1,x_1\right\rangle^\frac{1}{2}\right)^2.
 \end{align*}}
 \end{remark}
 %%%%%%%%%%%%%%%%%%%%%%%%%%%%%%%%%%%%%%%%%%%%%%%%%%%%%%%%%%%%%%%%%%%%%%%%%%%%%%%%%%%%%%%%%%%%%%%%%%%%%%%%%%%%%%
 \section{Some upper bounds  for $\omega_p$}
\bigskip In this section, we  obtain some upper bounds for $\omega_P$. We first show the following theorem.
\begin{theorem}\label{main4}
Let
$  \widetilde{S}_{i}=\left[\begin{array}{cc}
 A_i&0\\
 0&B_i
 \end{array}\right], \widetilde{T}_{i}=\left[\begin{array}{cc}
 0&X_{i}\\
 Y_{i}&0
 \end{array}\right]$ and $ \widetilde{U}_{i}=\left[\begin{array}{cc}
 C_i&0\\
 0&D_i
 \end{array}\right]
 $ be operators matrices in $ {\mathbb B}({\mathscr H_1\oplus\mathscr H_2})$$\,\,(1\leq i\leq n)$ such that $A_i, B_i, C_i$ and $D_i$ are contractions. Then
\begin{align*}
 \omega_{p}^{p}&({\widetilde{S}}^*_1\widetilde{T}_{1}\widetilde{U}_1, \cdots, {\widetilde{S}}^*_n\widetilde{T}_{n}\widetilde{U}_n)\\&\leq 2^{p-2}\sum_{i=1}^{n}\left\|D^*_if^{2p}(|X_{i}|)D_i+ B^*_ig^{2p}(|Y^*_{i}|)B_i\right\|^\frac{1}{2}\left\|C^*_if^{2p}(|Y_{i}|)C_i+A^*_ig^{2p}(|X^*_{i}|)A_i\right\|^\frac{1}{2}
 \end{align*}
 and
 \begin{align*}
 \omega_{p}^{p}&(\widetilde{S}^*_1\widetilde{T}_{1}\widetilde{U}_1, \cdots, \widetilde{S}^*_n\widetilde{T}_{n}\widetilde{U}_n)\\&\leq 2^{p-2}\sum_{i=1}^{n}\left\|D^*_if^{2p}(|X_{i}|)D_i+ B^*_if^{2p}(|Y^*_{i}|)B_i\right\|^\frac{1}{2}\left\|C^*_ig^{2p}(|Y_{i}|)C_i+A^*_ig^{2p}(|X^*_{i}|)A_i\right\|^\frac{1}{2},
 \end{align*}
 where $p\geq1$.
  \end{theorem}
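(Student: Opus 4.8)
The plan is to mimic the proof of Theorem \ref{main1}, working with the product operator matrices rather than $\widetilde{T}_i$ alone. First I would note that a direct computation gives
\[
\widetilde{S}^*_i\widetilde{T}_i\widetilde{U}_i=\left[\begin{array}{cc}0&A^*_iX_iD_i\\ B^*_iY_iC_i&0\end{array}\right],
\]
so for a unit vector $\mathbf{x}=\left[\begin{array}{c}x_1\\ x_2\end{array}\right]\in\mathscr H_1\oplus\mathscr H_2$ one has $\langle\widetilde{S}^*_i\widetilde{T}_i\widetilde{U}_i\mathbf{x},\mathbf{x}\rangle=\langle X_iD_ix_2,A_ix_1\rangle+\langle Y_iC_ix_1,B_ix_2\rangle$. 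Summing the $p$-th powers of the absolute values over $i$ is exactly what $\omega_p^p$ asks us to bound, so the strategy reduces to bounding each term $|\langle\widetilde{S}^*_i\widetilde{T}_i\widetilde{U}_i\mathbf{x},\mathbf{x}\rangle|^p$ by the same chain of inequalities used in Theorem \ref{main1}, and then summing and taking the supremum.

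For each $i$, I would apply the triangle inequality, then the convexity of $t\mapsto t^p$ to split into $\frac{2^p}{2}\big(|\langle X_iD_ix_2,A_ix_1\rangle|^p+|\langle Y_iC_ix_1,B_ix_2\rangle|^p\big)$, then Lemma \ref{5} to each inner product: for instance $|\langle X_iD_ix_2,A_ix_1\rangle|^2\le\langle f^2(|X_i|)D_ix_2,D_ix_2\rangle\langle g^2(|X^*_i|)A_ix_1,A_ix_1\rangle=\langle D^*_if^2(|X_i|)D_ix_2,x_2\rangle\langle A^*_ig^2(|X^*_i|)A_ix_1,x_1\rangle$. Raising to the power $p/2$, applying Lemma \ref{3}(a) to pass from $f^2(|X_i|)^p$-type quantities to $f^{2p}(|X_i|)$ (here the hypothesis that $A_i,B_i,C_i,D_i$ are contractions is used, so that e.g. $\|D_ix_2\|\le\|x_2\|\le 1$ and Lemma \ref{3}(a) applies to the unit-normalized vectors), and then the Cauchy–Schwarz inequality to combine the two $x_2$-factors and the two $x_1$-factors into $\langle(D^*_if^{2p}(|X_i|)D_i+B^*_ig^{2p}(|Y^*_i|)B_i)x_2,x_2\rangle^{1/2}$ and the analogous $x_1$-expression. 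Bounding these by operator norms times $\|x_1\|\|x_2\|$ gives
\[
|\langle\widetilde{S}^*_i\widetilde{T}_i\widetilde{U}_i\mathbf{x},\mathbf{x}\rangle|^p\le\frac{2^p}{2}\left\|D^*_if^{2p}(|X_i|)D_i+B^*_ig^{2p}(|Y^*_i|)B_i\right\|^{\frac12}\left\|C^*_if^{2p}(|Y_i|)C_i+A^*_ig^{2p}(|X^*_i|)A_i\right\|^{\frac12}\|x_1\|\|x_2\|.
\]
Summing over $i=1,\dots,n$, using $\|x_1\|\|x_2\|\le\frac{\|x_1\|^2+\|x_2\|^2}{2}=\frac12$ from the arithmetic–geometric mean inequality, and taking the supremum over unit vectors $\mathbf{x}$ yields the first inequality with constant $\frac{2^p}{4}=2^{p-2}$. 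For the second inequality I would instead use the alternative application of Lemma \ref{5} from \eqref{main1eq}, assigning $f^2(|Y^*_i|)$ and $g^2(|Y_i|)$ to the second term, and run the identical argument.

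The main obstacle, and the one step that needs genuine care rather than being purely routine, is the application of Lemma \ref{3}(a) after the contractions $A_i,B_i,C_i,D_i$ have been inserted: Lemma \ref{3}(a) requires the vector to have norm at most $1$, and the relevant vectors are $D_ix_2$, $A_ix_1$, etc., so one must invoke the contraction hypothesis to guarantee $\|D_ix_2\|\le\|x_2\|\le 1$ before applying it, and likewise keep track that $\langle f^2(|X_i|)D_ix_2,D_ix_2\rangle^{p/2}\le\langle f^{2p}(|X_i|)D_ix_2,D_ix_2\rangle$ with the correct normalization. Everything else is a faithful repetition of the Theorem \ref{main1} computation performed term-by-term inside the sum, so once this point is handled correctly the rest is bookkeeping.
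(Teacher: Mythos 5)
Your proposal is correct and follows essentially the same route as the paper: compute the off-diagonal form of $\widetilde{S}^*_i\widetilde{T}_i\widetilde{U}_i$, then for each $i$ apply the triangle inequality, convexity of $t^p$, Lemma \ref{5}, Lemma \ref{3}(a) (justified by the contraction hypothesis, which you correctly flag as the point where $\|D_ix_2\|\le 1$ etc.\ is needed), Cauchy--Schwarz, and the arithmetic--geometric mean bound before taking the supremum. No gaps; this matches the paper's argument step for step, including the alternative application of Lemma \ref{5} for the second inequality.
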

 \begin{proof}
 For any unit vector $\mathbf{x}=\left[\begin{array}{cc}
 x_1\\
 x_2
 \end{array}\right]
 \in {\mathscr H_1\oplus\mathscr H_2}$ we have

 {\footnotesize\begin{align*}
 &\sum_{i=1}^{n}|\langle T_{i}\mathbf{x}, \mathbf{x}\rangle|^{p}
 \\&=\sum_{i=1}^{n}|\langle A^*_iX_{i}D_ix_2, x_1\rangle+\langle B_i^*Y_{i}C_ix_1, x_2\rangle|^{p}
  \\&\leq \sum_{i=1}^{n}\left(|\langle A^*_iX_{i}D_ix_2, x_1\rangle|+|\langle B_i^*Y_{i}C_ix_1, x_2\rangle|\right)^{p}
  \qquad (\textrm {by the triangular inequality})
 \\&\leq\frac{2^p}{2}\sum_{i=1}^{n}|\langle A^*_iX_{i}D_ix_2, x_1\rangle|^{p}+|\langle B_i^*Y_{i}C_ix_1, x_2\rangle|^{p}
 \qquad (\textrm {by the convexity\,} f(t)=t^p)
 \\&=\frac{2^p}{2}\sum_{i=1}^{n}|\langle X_{i}D_ix_2, A_ix_1\rangle|^{p}+|\langle Y_{i}C_ix_1, B_ix_2\rangle|^{p}
 \\&\leq\frac{2^p}{2}\sum_{i=1}^{n}\langle f^2(|X_{i}|)D_ix_2, D_ix_2\rangle^\frac{p}{2}\langle g^2(|X^*_{i}|)A_ix_1, A_ix_1\rangle^\frac{p}{2}
 \\&\,\,\,\,\,+\langle f^2(|Y_{i}|)C_ix_1, C_ix_1\rangle^\frac{p}{2}\langle g^2(|Y^*_{i}|)B_ix_2, B_ix_2\rangle^\frac{p}{2}\qquad\qquad(\textrm {by Lemma\,\,}\ref{5})
\\&\leq\frac{2^p}{2}\sum_{i=1}^{n}\langle f^{2p}(|X_{i}|)D_ix_2, D_ix_2\rangle^\frac{1}{2}\langle g^{2p}(|X^*_{i}|)A_ix_1, A_ix_1\rangle^\frac{1}{2}
\\&\,\,\,\,\,+\langle f^{2p}(|Y_{i}|)C_ix_1, C_ix_1\rangle^\frac{1}{2}\langle g^{2p}(|Y^*_{i}|)B_ix_2, B_ix_2\rangle^\frac{1}{2}\qquad\qquad(\textrm {by Lemma\,\,\ref{3}(a)})
\\&=\frac{2^p}{2}\sum_{i=1}^{n}\langle D^*_if^{2p}(|X_{i}|)D_ix_2, x_2\rangle^\frac{1}{2}\langle A^*_ig^{2p}(|X^*_{i}|)A_ix_1, x_1\rangle^\frac{1}{2}
\\&\,\,\,\,\,+\langle C^*_if^{2p}(|Y_{i}|)C_ix_1, x_1\rangle^\frac{1}{2}\langle B^*_ig^{2p}(|Y^*_{i}|)B_ix_2, x_2\rangle^\frac{1}{2}
\\&\leq\frac{2^p}{2}\sum_{i=1}^{n}\left(\left\langle D^*_if^{2p}(|X_{i}|)D_ix_2, x_2\rangle+\langle B^*_ig^{2p}(|Y^*_{i}|)B_ix_2, x_2\right\rangle\right)^\frac{1}{2}\\&
\,\,\,\,\,\times\left(\left\langle C^*_if^{2p}(|Y_{i}|)C_ix_1, x_1\right\rangle+\left\langle A^*_ig^{2p}(|X^*_{i}|)A_ix_1, x_1\right\rangle\right)^\frac{1}{2}\,\,(\textrm {by the Cauchy-Schwarz inequality})
\end{align*}

\begin{align*}
&=\frac{2^p}{2}\sum_{i=1}^{n}\Big(\left\langle \left(D^*_if^{2p}(|X_{i}|)D_i+ B^*_ig^{2p}(|Y^*_{i}|)B_i\right)x_2, x_2\right\rangle\Big)^\frac{1}{2}\\&\,\,\,\,\,\times\Big(\left\langle \left(C^*_if^{2p}(|Y_{i}|)C_i+A^*_ig^{2p}(|X^*_{i}|)A_i\right)x_1, x_1\right\rangle\Big)^\frac{1}{2}
\\&\leq\frac{2^p}{2}\sum_{i=1}^{n}\left\|D^*_if^{2p}(|X_{i}|)D_i+ B^*_ig^{2p}(|Y^*_{i}|)B_i\right\|^\frac{1}{2}\left\|C^*_if^{2p}(|Y_{i}|)C_i+A^*_ig^{2p}(|X^*_{i}|)A_i\right\|^\frac{1}{2}\|x_1\|\|x_2\|
\\&=\frac{2^p}{2}\sum_{i=1}^{n}\left\|D^*_if^{2p}(|X_{i}|)D_i+ B^*_ig^{2p}(|Y^*_{i}|)B_i\right\|^\frac{1}{2}\\&\,\,\,\,\,\times\left\|C^*_if^{2p}(|Y_{i}|)C_i+A^*_ig^{2p}(|X^*_{i}|)A_i\right\|^\frac{1}{2}
\left(\frac{\|x_1\|^2+\|x_2\|^2}{2}\right)\\&
=\frac{2^p}{4}\sum_{i=1}^{n}\left\|D^*_if^{2p}(|X_{i}|)D_i+ B^*_ig^{2p}(|Y^*_{i}|)B_i\right\|^\frac{1}{2}\left\|C^*_if^{2p}(|Y_{i}|)C_i+A^*_ig^{2p}(|X^*_{i}|)A_i\right\|^\frac{1}{2}.
  \end{align*}}
Taking the supremum over all unit vectors $\mathbf{x}\in {\mathscr H_1\oplus\mathscr H_2}$ we obtain the first inequality. Using the inequality
 \begin{align*}
 &\sum_{i=1}^{n}|\left\langle T_{i}\mathbf{x}, \mathbf{x}\right\rangle|^{p}
 \\&=\sum_{i=1}^{n}|\left\langle A^*_iX_{i}D_ix_2, x_1\right\rangle+\left\langle B_i^*Y_{i}C_ix_1, x_2\right\rangle|^{p}
  \\&\leq \sum_{i=1}^{n}\left(|\left\langle A^*_iX_{i}D_ix_2, x_1\right\rangle|+|\left\langle B_i^*Y_{i}C_ix_1, x_2\right\rangle|\right)^{p}
  \qquad (\textrm {by the triangular inequality})
 \\&\leq\frac{2^p}{2}\sum_{i=1}^{n}|\left\langle A^*_iX_{i}D_ix_2, x_1\right\rangle|^{p}+|\left\langle B_i^*Y_{i}C_ix_1, x_2\right\rangle|^{p}
 \qquad (\textrm {by the convexity\,} f(t)=t^p)
 \\&=\frac{2^p}{2}\sum_{i=1}^{n}|\left\langle X_{i}D_ix_2, A_ix_1\right\rangle|^{p}+|\left\langle Y_{i}C_ix_1, B_ix_2\right\rangle|^{p}
 \\&\leq\frac{2^p}{2}\sum_{i=1}^{n}\left\langle f^2(|X_{i}|)D_ix_2, D_ix_2\right\rangle^\frac{p}{2}\left\langle g^2(|X^*_{i}|)A_ix_1, A_ix_1\right\rangle^\frac{p}{2}
 \\&\,\,\,\,\,+\left\langle g^2(|Y_{i}|)C_ix_1, C_ix_1\right\rangle^\frac{p}{2}\left\langle f^2(|Y^*_{i}|)B_ix_2, B_ix_2\right\rangle^\frac{p}{2}\\&
\qquad\qquad\qquad\qquad\qquad\qquad\qquad\qquad\qquad\qquad(\textrm {by Lemma\,\,}\ref{5})
 \end{align*}
 and a similar fashion in the proof of the first inequality we reach the second inequality.
 \end{proof}
 \bigskip In the special case of Theorem \ref{main4} for $A_i=B_i=C_i=D_i=I\,\,(1\leq i\leq n)$ we have the next result.
 \begin{corollary}
 Let
 $T_{i}=\left[\begin{array}{cc}
 0&X_{i}\\
 Y_{i}&0
 \end{array}\right]
 \in {\mathbb B}({\mathscr H_1\oplus\mathscr H_2})\,\,(1\leq j\leq n)$. Then
\begin{align*}
 \omega_{p}^{p}(T_{1}, T_{2},\cdots,T_{n})\leq 2^{p-2}\sum_{i=1}^{n}\left\|f^{2p}(|X_{i}|)+g^{2p}(|Y^*_{i}|)\right\|^\frac{1}{2}
\left\|f^{2p}(|Y_{i}|)+g^{2p}(|X^*_{i}|)\right\|^\frac{1}{2}
 \end{align*}
 and
 \begin{align*}
 \omega_{p}^{p}(T_{1}, T_{2},\cdots,T_{n})\leq 2^{p-2}\sum_{i=1}^{n}\left\|f^{2p}(|X_{i}|)+f^{2p}(|Y^*_{i}|)\right\|^\frac{1}{2}
\left\|g^{2p}(|Y_{i}|)+g^{2p}(|X^*_{i}|)\right\|^\frac{1}{2}
 \end{align*}
 for  $p\geq 1$.
 \end{corollary}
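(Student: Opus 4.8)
The plan is to reduce everything to a single estimate for a unit vector $\mathbf{x}=[x_1,x_2]^{T}\in\mathscr H_1\oplus\mathscr H_2$ and then take the supremum, running exactly the chain of inequalities from the proof of Theorem \ref{main1}; the only genuinely new feature is that the conjugating diagonal blocks $A_i,B_i,C_i,D_i$ are contractions, and this is precisely what will license the use of Lemma \ref{3}(a). First I would note that since $\widetilde S_i$ and $\widetilde U_i$ are diagonal while $\widetilde T_i$ is off-diagonal,
\[
\widetilde S_i^{*}\widetilde T_i\widetilde U_i=\left[\begin{array}{cc}0&A_i^{*}X_iD_i\\ B_i^{*}Y_iC_i&0\end{array}\right],
\]
so that $\langle \widetilde S_i^{*}\widetilde T_i\widetilde U_i\mathbf{x},\mathbf{x}\rangle=\langle A_i^{*}X_iD_ix_2,x_1\rangle+\langle B_i^{*}Y_iC_ix_1,x_2\rangle=\langle X_iD_ix_2,A_ix_1\rangle+\langle Y_iC_ix_1,B_ix_2\rangle$ after moving the adjoints across.

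Next I would apply the triangle inequality and the convexity of $t\mapsto t^{p}$ to obtain
\[
\sum_{i=1}^{n}|\langle \widetilde S_i^{*}\widetilde T_i\widetilde U_i\mathbf{x},\mathbf{x}\rangle|^{p}\leq\frac{2^{p}}{2}\sum_{i=1}^{n}\Big(|\langle X_iD_ix_2,A_ix_1\rangle|^{p}+|\langle Y_iC_ix_1,B_ix_2\rangle|^{p}\Big).
\]
Then Lemma \ref{5}, applied termwise in the form $|\langle X_iD_ix_2,A_ix_1\rangle|^{2}\le\langle f^{2}(|X_i|)D_ix_2,D_ix_2\rangle\langle g^{2}(|X_i^{*}|)A_ix_1,A_ix_1\rangle$ and analogously for the $Y_i$-term, turns the right-hand side into a sum of products of $\langle\cdot\,,\cdot\rangle^{p/2}$-quantities. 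Here the contraction hypothesis enters: since $A_i,B_i,C_i,D_i$ are contractions and $\|x_1\|,\|x_2\|\le 1$, the vectors $D_ix_2$, $A_ix_1$, $C_ix_1$, $B_ix_2$ all have norm at most $1$, so Lemma \ref{3}(a) with $r=p\ge 1$ upgrades each factor, e.g. $\langle f^{2}(|X_i|)D_ix_2,D_ix_2\rangle^{p/2}\le\langle f^{2p}(|X_i|)D_ix_2,D_ix_2\rangle^{1/2}=\langle D_i^{*}f^{2p}(|X_i|)D_ix_2,x_2\rangle^{1/2}$, and similarly for the other three blocks.

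Having reached
\[
\frac{2^{p}}{2}\sum_{i=1}^{n}\Big(\langle D_i^{*}f^{2p}(|X_i|)D_ix_2,x_2\rangle^{\frac12}\langle A_i^{*}g^{2p}(|X_i^{*}|)A_ix_1,x_1\rangle^{\frac12}+\langle C_i^{*}f^{2p}(|Y_i|)C_ix_1,x_1\rangle^{\frac12}\langle B_i^{*}g^{2p}(|Y_i^{*}|)B_ix_2,x_2\rangle^{\frac12}\Big),
\]
I would group the $x_2$-quantities with the $x_2$-quantities and the $x_1$-quantities with the $x_1$-quantities and apply the elementary Cauchy--Schwarz inequality $ac+bd\le(a^{2}+b^{2})^{1/2}(c^{2}+d^{2})^{1/2}$ termwise, collapse the two inner sums using additivity of the quadratic forms, bound each quadratic form by the corresponding operator norm, and finish with $\|x_1\|\|x_2\|\le\frac12(\|x_1\|^{2}+\|x_2\|^{2})=\frac12$. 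Since $\frac{2^{p}}{4}=2^{p-2}$, taking the supremum over all unit vectors $\mathbf{x}$ gives the first inequality. For the second inequality I would repeat the argument but invoke Lemma \ref{5} in the alternative pairing used in \eqref{main1eq}, namely $|\langle Y_iC_ix_1,B_ix_2\rangle|^{2}\le\langle g^{2}(|Y_i|)C_ix_1,C_ix_1\rangle\langle f^{2}(|Y_i^{*}|)B_ix_2,B_ix_2\rangle$, which only reshuffles which blocks carry $f$ and which carry $g$. The main obstacle is purely bookkeeping: keeping straight which vector ($x_1$ or $x_2$) and which function ($f$ or $g$) attaches to each of the four blocks so that the termwise Cauchy--Schwarz pairs the $x_2$-terms together and the $x_1$-terms together; the contractivity of $A_i,B_i,C_i,D_i$ is exactly the hypothesis that makes the passage from $\langle\cdot\rangle^{p/2}$ to $\langle(\cdot)^{p}\rangle^{1/2}$ via Lemma \ref{3}(a) legitimate.
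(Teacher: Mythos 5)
Your argument is correct and is essentially the paper's own route: you have reproduced the proof of Theorem \ref{main4} (triangle inequality, convexity of $t^{p}$, Lemma \ref{5}, Lemma \ref{3}(a) applied to the contracted vectors, Cauchy--Schwarz, and $\|x_1\|\|x_2\|\le\frac12$), of which the stated corollary is exactly the special case $A_i=B_i=C_i=D_i=I$ that the paper invokes. The swapped $f$--$g$ pairing for the second inequality likewise matches the paper's treatment via \eqref{main1eq}, so no gap remains.
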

 \bigskip If we put $f(t)=g(t)=t^{\frac{1}{2}}\,(t\in[0,\infty))$, then we get the next result.
 \begin{corollary}
 Let
 $T_{i}=\left[\begin{array}{cc}
 0&X_{i}\\
 Y_{i}&0
 \end{array}\right]
 \in {\mathbb B}({\mathscr H_1\oplus\mathscr H_2})\,\,(1\leq j\leq n)$. Then
 \begin{align*}
 \omega_{p}^{p}(T_{1}, T_{2},\cdots,T_{n})\leq 2^{p-2}\sum_{i=1}^{n}\left\||X_{i}|^p+|Y^*_{i}|^p\right\|^\frac{1}{2}
\left\||Y_{i}|^p+|X^*_{i}|^p\right\|^\frac{1}{2}
 \end{align*}
 for  $p\geq 1$.
 \end{corollary}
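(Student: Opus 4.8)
The plan is to obtain this statement as an immediate specialization of the preceding corollary (the one bounding $\omega_{p}^{p}(T_{1},\dots,T_{n})$ for general admissible $f,g$), applied with the particular choice $f(t)=g(t)=t^{1/2}$ on $[0,\infty)$. First I would check that this pair is admissible in the sense required: both $f$ and $g$ are nonnegative and continuous on $[0,\infty)$, and $f(t)g(t)=t^{1/2}\cdot t^{1/2}=t$ for every $t\geq 0$, so the relation $f(t)g(t)=t$ holds. Hence the hypotheses of the preceding corollary are satisfied with these $f$ and $g$.

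Next I would record what $f^{2p}$ and $g^{2p}$ become under this choice. By the continuous functional calculus, for any positive operator $S\in{\mathbb B}({\mathscr H})$ one has $f^{2p}(S)=\bigl(S^{1/2}\bigr)^{2p}=S^{p}$, and likewise $g^{2p}(S)=S^{p}$. Substituting $S=|X_{i}|,\ |Y_{i}^{*}|,\ |Y_{i}|,\ |X_{i}^{*}|$ in turn into the first inequality of the preceding corollary turns it into
\[
\omega_{p}^{p}(T_{1},\dots,T_{n})\leq 2^{p-2}\sum_{i=1}^{n}\left\| |X_{i}|^{p}+|Y_{i}^{*}|^{p} \right\|^{\frac12}\left\| |Y_{i}|^{p}+|X_{i}^{*}|^{p} \right\|^{\frac12},
\]
which is exactly the asserted bound. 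I would also note that, since here $f=g$, the second inequality of that corollary collapses to the very same expression, so no additional information is produced by it.

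There is no genuine obstacle in this argument; the only mildly non-routine points are the functional-calculus identity $f^{2p}(S)=S^{p}$ for $f(t)=t^{1/2}$ and the remark that the two bounds of the parent corollary coincide once $f=g$. If a self-contained derivation were preferred, one could instead invoke Theorem \ref{main4} directly with $A_{i}=B_{i}=C_{i}=D_{i}=I$ $(1\leq i\leq n)$ and the same choice $f(t)=g(t)=t^{1/2}$, following the chain through Lemma \ref{5} and Lemma \ref{3}(a); but routing through the preceding corollary is the shortest path and is the one I would present.
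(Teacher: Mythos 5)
Your proposal is correct and matches the paper's own route: the paper likewise obtains this corollary by taking $f(t)=g(t)=t^{1/2}$ in the preceding corollary (itself the case $A_i=B_i=C_i=D_i=I$ of Theorem \ref{main4}), with $f^{2p}(|X_i|)=|X_i|^p$ and similarly for the other absolute values. Nothing further is needed.
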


\begin{theorem}
\label{th1}Let
 $T_{i}=\left[
\begin{array}{cc}
A_{i} & B_{i} \\
C_{i} & D_{i}%
\end{array}%
\right]\in {\mathbb B}({\mathscr H_1}\oplus{\mathscr H_2}) \,\,(1\leq i\leq n)$ and $p\geq 1$. Then
\begin{align*}
\omega _{p}^{p}(T_{1},&\ldots ,T_{n})\\&\leq 2^{-p}\sum_{i=1}^{n}\left(
\omega \left( A_{i}\right) +\omega \left( D_{i}\right) +\sqrt{\left( \omega
\left( A_{i}\right) -\omega \left( D_{i}\right) \right) ^{2}+\left(
\left\Vert B_{i}\right\Vert +\left\Vert C_{i}\right\Vert \right) ^{2}}%
\right) ^{p}.  \label{el2}
\end{align*}%
In particular,
\begin{equation*}
\omega \left(\left[
\begin{array}{cc}
A & B \\
C & D%
\end{array}%
\right] \right)\leq \frac{1}{2}\left( \omega \left( A\right) +\omega \left(
D\right) +\sqrt{\left( \omega \left( A\right) -\omega \left( D\right)
\right) ^{2}+\left( \left\Vert B\right\Vert +\left\Vert C\right\Vert \right)
^{2}}\right) .
\end{equation*}
\end{theorem}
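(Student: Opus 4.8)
The plan is to bound $|\langle T_i\mathbf{x},\mathbf{x}\rangle|$ for an arbitrary unit vector $\mathbf{x}=\left[\begin{array}{c}x_1\\x_2\end{array}\right]\in\mathscr H_1\oplus\mathscr H_2$ and then to optimize over the splitting $\|x_1\|^2+\|x_2\|^2=1$. First I would expand the block action,
\[
\langle T_i\mathbf{x},\mathbf{x}\rangle=\langle A_ix_1,x_1\rangle+\langle B_ix_2,x_1\rangle+\langle C_ix_1,x_2\rangle+\langle D_ix_2,x_2\rangle,
\]
and apply the triangle inequality together with the elementary estimates $|\langle A_ix_1,x_1\rangle|\le\omega(A_i)\|x_1\|^2$ and $|\langle D_ix_2,x_2\rangle|\le\omega(D_i)\|x_2\|^2$ (immediate from the definition of the numerical radius), as well as $|\langle B_ix_2,x_1\rangle|\le\|B_i\|\,\|x_1\|\,\|x_2\|$ and $|\langle C_ix_1,x_2\rangle|\le\|C_i\|\,\|x_1\|\,\|x_2\|$ (Cauchy--Schwarz). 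This yields
\[
|\langle T_i\mathbf{x},\mathbf{x}\rangle|\le\omega(A_i)\|x_1\|^2+\omega(D_i)\|x_2\|^2+\bigl(\|B_i\|+\|C_i\|\bigr)\|x_1\|\,\|x_2\|.
\]

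Next I would put $\|x_1\|=\cos\theta$, $\|x_2\|=\sin\theta$ for some $\theta\in[0,\pi/2]$, so that the right-hand side equals
\[
\frac{\omega(A_i)+\omega(D_i)}{2}+\frac{\omega(A_i)-\omega(D_i)}{2}\cos2\theta+\frac{\|B_i\|+\|C_i\|}{2}\sin2\theta .
\]
Invoking $a\cos\phi+b\sin\phi\le\sqrt{a^2+b^2}$, this is at most $\tfrac12 M_i$, where $M_i:=\omega(A_i)+\omega(D_i)+\sqrt{(\omega(A_i)-\omega(D_i))^2+(\|B_i\|+\|C_i\|)^2}$. Hence $|\langle T_i\mathbf{x},\mathbf{x}\rangle|\le\tfrac12 M_i$ for every unit vector $\mathbf{x}$ and every $i$.

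To finish I would raise this to the power $p$, sum over $i$, and take the supremum over unit vectors:
\[
\omega_p^p(T_1,\dots,T_n)=\sup_{\|\mathbf{x}\|=1}\sum_{i=1}^n|\langle T_i\mathbf{x},\mathbf{x}\rangle|^p\le 2^{-p}\sum_{i=1}^n M_i^p ,
\]
which is precisely the asserted inequality; the particular case follows by taking $n=1$, $p=1$ and recalling that $\omega_1(T)=\omega(T)$. No step here is genuinely hard; the only one calling for a little care is the trigonometric maximization in the second step. One could in fact retain the parameter $\theta$ common to all $i$ and obtain a sharper (but less tidy) bound, whereas the stated inequality results from simply majorizing each summand by its own maximum.
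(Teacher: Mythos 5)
Your proposal is correct and follows essentially the same route as the paper: expand the block form, bound the diagonal terms by $\omega(A_i)\|x_1\|^2$, $\omega(D_i)\|x_2\|^2$ and the off-diagonal terms by $\|B_i\|\|x_1\|\|x_2\|$, $\|C_i\|\|x_1\|\|x_2\|$, parametrize $\|x_1\|=\cos\theta$, $\|x_2\|=\sin\theta$, and perform the same trigonometric maximization (your double-angle rewriting is just the explicit form of the paper's computation of the supremum over $\theta$). The final step of majorizing each summand by its own maximum before summing and taking the supremum is also exactly what the paper does.
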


\begin{proof}
Let $\mathbf{x}=\left[
\begin{array}{c}
x_1 \\
x_2%
\end{array}%
\right] $ be a unit vector in ${\mathscr H_1\oplus\mathscr H_2}$. Then

\begin{align*}
\left\vert \left\langle T_{i}\mathbf{x},\mathbf{x}\right\rangle \right\vert & =\left\vert
\left\langle \left[
\begin{array}{cc}
A_{i} & B_{i} \\
C_{i} & D_{i}%
\end{array}%
\right] \left[
\begin{array}{c}
x_1 \\
x_2%
\end{array}%
\right] ,\left[
\begin{array}{c}
x_1 \\
x_2%
\end{array}%
\right] \right\rangle \right\vert \\
& =\left\vert \left\langle \left[
\begin{array}{c}
A_{i}x_1+B_{i}x_2 \\
C_{i}x_1+D_{i}x_2%
\end{array}%
\right] ,\left[
\begin{array}{c}
x_1 \\
x_2%
\end{array}%
\right] \right\rangle \right\vert \\
& =\left\vert \left\langle A_{i}x_1,x_1\right\rangle +\left\langle
B_{i}x_2,x_1\right\rangle +\left\langle C_{i}x_1,x_2\right\rangle +\left\langle
D_{i}x_2,x_2\right\rangle \right\vert \\
& \leq\left\vert \left\langle A_{i}x_1,x_1\right\rangle \right\vert +\left\vert
\left\langle B_{i}x_2,x_1\right\rangle \right\vert +\left\vert \left\langle
C_{i}x_1,x_2\right\rangle \right\vert +\left\vert \left\langle
D_{i}x_2,x_2\right\rangle \right\vert
\end{align*}%
Thus,
\begin{align*}
&\omega _{p}^{p}(T_{1},\ldots ,T_{n})\\& =\sup_{\Vert \mathbf{x}\Vert
=1}\sum_{i=1}^{n}\left\vert \left\langle T_{i}\mathbf{x},\mathbf{x}\right\rangle \right\vert
^{p} \\
& \leq \sup_{\Vert x_1\Vert ^{2}+\Vert x_2\Vert ^{2}=1}\sum_{i=1}^{n}\left(
\left\vert \left\langle A_{i}x_1,x_1\right\rangle \right\vert +\left\vert
\left\langle B_{i}x_2,x_1\right\rangle \right\vert +\left\vert \left\langle
C_{i}x_1,x_2\right\rangle \right\vert +\left\vert \left\langle
D_{i}x_2,x_2\right\rangle \right\vert \right) ^{p} \\
& \leq \sum_{i=1}^{n}\left( \sup_{\Vert x_1\Vert ^{2}+\Vert y\Vert
^{2}=1}\left( \left\vert \left\langle A_{i}x_1,x_1\right\rangle \right\vert
+\left\vert \left\langle B_{i}x_2,x_1\right\rangle \right\vert +\left\vert
\left\langle C_{i}x_1,x_2\right\rangle \right\vert +\left\vert \left\langle
D_{i}x_2,x_2\right\rangle \right\vert \right) \right) ^{p} \\
& \leq \sum_{i=1}^{n}\left( \sup_{\Vert x_1\Vert ^{2}+\Vert x_2\Vert
^{2}=1}\left( \omega \left( A_{i}\right) \left\Vert x_1\right\Vert ^{2}+\omega
\left( D_{i}\right) \left\Vert x_2\right\Vert ^{2}+\left( \left\Vert
B_{i}\right\Vert +\left\Vert C_{i}\right\Vert \right) \left\Vert
x_1\right\Vert \left\Vert x_2\right\Vert \right) \right) ^{p} \\
& =\sum_{i=1}^{n}\left( \sup_{\theta \in \left[ 0,2\pi \right] }\left(
\omega \left( A_{i}\right) \cos^2 \theta +\omega \left( D_{i}\right) \sin^2
\theta +\left( \left\Vert B_{i}\right\Vert +\left\Vert C_{i}\right\Vert
\right) \cos \theta \sin \theta \right) \right) ^{p} \\
& =2^{-p}\sum_{i=1}^{n}\left( \omega \left( A_{i}\right) +\omega \left(
D_{i}\right) +\sqrt{\left( \omega \left( A_{i}\right) -\omega \left(
D_{i}\right) \right) ^{2}+\left( \left\Vert B_{i}\right\Vert +\left\Vert
C_{i}\right\Vert \right) ^{2}}\right) ^{p}.
\end{align*}%
This completes the proof.
\end{proof}
\bigskip For $A_{i}=D_{i}$ and $B_{i}=C_{i}\,\,(1\leq i\leq n)$ we get the following result.
\begin{corollary}
Let $T_{i}=\left[
\begin{array}{cc}
\pm A_{i} & \pm B_{i} \\
\pm B_{i} & \pm A_{i}%
\end{array}%
\right] \ $be an operator matrix with $A_{i},B_{i}\in \mathbb{B}(\mathscr{H}%
) $ $\,(1\leq i\leq n)$. Then for all $p\geq 1$,
\begin{equation*}
\omega _{p}^{p}(T_{1},\ldots ,T_{n})\leq \sum_{i=1}^{n}\left( \omega
\left( A_{i}\right) +\left\Vert B_{i}\right\Vert \right) ^{p}.
\end{equation*}%
In particular, if $A,B\in \mathbb{B}(\mathscr{H})$, then%
\begin{equation*}
\omega \left(\left[
\begin{array}{cc}
\pm A & \pm B \\
\pm B & \pm A%
\end{array}%
\right] \right)\leq \omega \left( A\right) +\left\Vert B\right\Vert .
\end{equation*}
\end{corollary}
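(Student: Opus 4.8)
The plan is to obtain this corollary as a direct specialization of Theorem~\ref{th1}, using only that the numerical radius and the operator norm are insensitive to sign changes. I would encode the ambiguous signs by fixing $\varepsilon_{1},\varepsilon_{2},\varepsilon_{3},\varepsilon_{4}\in\{1,-1\}$ so that
\[
T_{i}=\left[\begin{array}{cc} \varepsilon_{1}A_{i} & \varepsilon_{2}B_{i}\\ \varepsilon_{3}B_{i} & \varepsilon_{4}A_{i}\end{array}\right],
\]
and observe that the estimate below will not depend on the particular choice, so it is enough to treat this generic sign pattern. First I would record the elementary identities $\omega(\pm A_{i})=\omega(A_{i})$, which is immediate from $|\langle(\pm A_{i})x,x\rangle|=|\langle A_{i}x,x\rangle|$, together with $\|\pm B_{i}\|=\|B_{i}\|$.

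Next I would apply Theorem~\ref{th1} to the operator matrices $T_{1},\dots,T_{n}$, identifying $A_{i}\mapsto\varepsilon_{1}A_{i}$, $D_{i}\mapsto\varepsilon_{4}A_{i}$, $B_{i}\mapsto\varepsilon_{2}B_{i}$ and $C_{i}\mapsto\varepsilon_{3}B_{i}$. By the identities above, $\omega(\varepsilon_{1}A_{i})+\omega(\varepsilon_{4}A_{i})=2\omega(A_{i})$, $\omega(\varepsilon_{1}A_{i})-\omega(\varepsilon_{4}A_{i})=0$, and $\|\varepsilon_{2}B_{i}\|+\|\varepsilon_{3}B_{i}\|=2\|B_{i}\|$. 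Substituting these into the bound of Theorem~\ref{th1} yields
\[
\omega_{p}^{p}(T_{1},\dots,T_{n})\leq 2^{-p}\sum_{i=1}^{n}\Bigl(2\omega(A_{i})+\sqrt{0^{2}+(2\|B_{i}\|)^{2}}\,\Bigr)^{p}=2^{-p}\sum_{i=1}^{n}\bigl(2\omega(A_{i})+2\|B_{i}\|\bigr)^{p}=\sum_{i=1}^{n}\bigl(\omega(A_{i})+\|B_{i}\|\bigr)^{p},
\]
which is the asserted inequality. The ``in particular'' statement is then just the case $n=1$ and $p=1$, since $\omega_{1}(\,\cdot\,)=\omega(\,\cdot\,)$.

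Since the argument is a one-line substitution into Theorem~\ref{th1}, there is essentially no obstacle here; the only points requiring (trivial) care are to state explicitly that the bound holds for every admissible choice of the signs and to justify the sign-invariance of $\omega(\,\cdot\,)$ and $\|\,\cdot\,\|$, so that the discriminant under the square root collapses to $0$ and the remaining term becomes $2\|B_{i}\|$, producing the factor $2^{p}$ that cancels the $2^{-p}$ in front.
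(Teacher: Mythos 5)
Your proposal is correct and follows exactly the route the paper intends: the corollary is obtained by specializing Theorem~\ref{th1} to $D_{i}=\pm A_{i}$, $C_{i}=\pm B_{i}$, using the sign-invariance of $\omega(\cdot)$ and $\|\cdot\|$ so that the square-root term reduces to $2\|B_{i}\|$ and the factor $2^{-p}$ cancels. The ``in particular'' case via $n=1$, $p=1$ is likewise the intended reading, so nothing further is needed.
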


\bigskip If we take $B_{i}=C_{i}=0\,\,(1\leq i\leq n)$ in Theorem $\ref{th1}$, then we get the
following inequality.
\begin{corollary}
\label{t1}Let  $T_{i}=\left[
\begin{array}{cc}
A_{i} & 0 \\
0 & D_{i}%
\end{array}%
\right]\in{\mathbb B}({\mathscr H_1\oplus\mathscr H_2}) \,\,(1\leq i\leq n)$. Then for
all $p\geq 1$,
\begin{equation*}
\omega _{p}^{p}(T_{1},\ldots ,T_{n})\leq \sum_{i=1}^{n}\max \left(
\omega ^{p}\left( A_{i}\right) ,\omega ^{p}\left( D_{i}\right) \right) .
\end{equation*}
\end{corollary}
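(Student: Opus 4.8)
The plan is to obtain this as an immediate specialization of Theorem~\ref{th1}. Setting $B_i=C_i=0$ for every $i$ makes $\left\Vert B_i\right\Vert +\left\Vert C_i\right\Vert =0$, so the bound in Theorem~\ref{th1} collapses to
\[
\omega_p^p(T_1,\ldots,T_n)\le 2^{-p}\sum_{i=1}^n\Bigl(\omega(A_i)+\omega(D_i)+\sqrt{(\omega(A_i)-\omega(D_i))^2}\,\Bigr)^p .
\]
The only thing left is the elementary scalar identity $a+b+|a-b|=2\max\{a,b\}$, valid for all real $a,b$; applied with $a=\omega(A_i)\ge 0$ and $b=\omega(D_i)\ge 0$ it turns the $i$-th summand into $2^{-p}\bigl(2\max\{\omega(A_i),\omega(D_i)\}\bigr)^p=\max\{\omega(A_i),\omega(D_i)\}^p$. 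Since $t\mapsto t^p$ is nondecreasing on $[0,\infty)$ we have $\max\{\omega(A_i),\omega(D_i)\}^p=\max\{\omega^p(A_i),\omega^p(D_i)\}$, and summing over $i$ gives the claim.

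Alternatively — and this is in effect how Theorem~\ref{th1} specialises in the diagonal case — one can argue directly: for a unit vector $\mathbf{x}=\left[\begin{array}{c}x_1\\x_2\end{array}\right]\in{\mathscr H_1\oplus\mathscr H_2}$ one has $\langle T_i\mathbf{x},\mathbf{x}\rangle=\langle A_ix_1,x_1\rangle+\langle D_ix_2,x_2\rangle$, so
\[
|\langle T_i\mathbf{x},\mathbf{x}\rangle|\le \omega(A_i)\Vert x_1\Vert^2+\omega(D_i)\Vert x_2\Vert^2\le \max\{\omega(A_i),\omega(D_i)\}\bigl(\Vert x_1\Vert^2+\Vert x_2\Vert^2\bigr)=\max\{\omega(A_i),\omega(D_i)\},
\]
the middle step using only that a convex combination of two nonnegative numbers never exceeds their maximum. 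Raising to the $p$-th power, summing over $i$, and taking the supremum over unit vectors again yields the inequality.

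There is no genuine obstacle: the whole content sits in Theorem~\ref{th1}, and the corollary is a one-line simplification once the identity $a+b+|a-b|=2\max\{a,b\}$ is recorded. The single point that deserves a moment's care is bookkeeping the factor $2^{-p}$ against the $2^p$ produced by $\bigl(2\max\{\cdot,\cdot\}\bigr)^p$, so that they cancel exactly and leave the clean bound $\sum_{i=1}^n\max\{\omega^p(A_i),\omega^p(D_i)\}$.
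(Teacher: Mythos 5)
Your argument is correct and is exactly how the paper obtains this corollary: specialize Theorem~\ref{th1} to $B_i=C_i=0$ and simplify via $a+b+|a-b|=2\max\{a,b\}$ (the paper leaves this last algebraic step implicit). Your alternative direct computation is also fine, but adds nothing beyond the specialization.
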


\bigskip For $C_{i}=D_{i}=0\,\,(1\leq i\leq n)$ we obtain a result that generalize
and refine the inequality $\omega\left( \left[
\begin{array}{cc}
A & B \\
0 & 0%
\end{array}%
\right] \right) \leq \omega(A)+\frac{\left\Vert B\right\Vert }{2}.$

\begin{corollary}
Let  $%
T_{i}=\left[
\begin{array}{cc}
A_{i} & B_{i} \\
0 & 0%
\end{array}%
\right]\in{\mathbb B}({\mathscr H_1\oplus\mathscr H_2}) \, \,(1\leq i\leq n)$ and $p\geq 1$.
Then
\begin{equation*}
\omega _{p}^{p}(T_{1},\ldots ,T_{n})\leq 2^{-p}\sum_{i=1}^{n}\left(
\omega \left( A_{i}\right) +\sqrt{\omega ^{2}\left( A_{i}\right) +\left\Vert
B_{i}\right\Vert ^{2}}\right) ^{p}.
\end{equation*}%
In particular,
\begin{equation*}
\omega \left( \left[
\begin{array}{cc}
A & B \\
0 & 0%
\end{array}%
\right] \right) \leq \frac{1}{2}\left( \omega \left( A\right) +\sqrt{%
\omega ^{2}\left( A\right) +\left\Vert B\right\Vert ^{2}}\right) .
\end{equation*}%
\bigskip If we put  $A_{i}=D_{i}=0\,\,(1\leq i\leq n)$, then we deduce
\end{corollary}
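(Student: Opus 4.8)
The plan is to reduce the estimate to a one‑parameter scalar maximization. Fix $i$ and a unit vector $\mathbf{x}=\left[\begin{array}{c}x_1\\x_2\end{array}\right]\in\mathscr{H}_1\oplus\mathscr{H}_2$, so that $\|x_1\|^2+\|x_2\|^2=1$. Computing the action of the block matrix,
\[
\langle T_i\mathbf{x},\mathbf{x}\rangle=\langle A_ix_1,x_1\rangle+\langle B_ix_2,x_1\rangle+\langle C_ix_1,x_2\rangle+\langle D_ix_2,x_2\rangle,
\]
so the triangle inequality gives
\[
|\langle T_i\mathbf{x},\mathbf{x}\rangle|\le|\langle A_ix_1,x_1\rangle|+|\langle B_ix_2,x_1\rangle|+|\langle C_ix_1,x_2\rangle|+|\langle D_ix_2,x_2\rangle|.
\]

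First I would bound the four terms one at a time. After normalizing $x_1$ and $x_2$, the definition of the numerical radius gives $|\langle A_ix_1,x_1\rangle|\le\omega(A_i)\|x_1\|^2$ and $|\langle D_ix_2,x_2\rangle|\le\omega(D_i)\|x_2\|^2$, while the Cauchy--Schwarz inequality together with the operator norm gives $|\langle B_ix_2,x_1\rangle|+|\langle C_ix_1,x_2\rangle|\le(\|B_i\|+\|C_i\|)\|x_1\|\|x_2\|$. Hence
\[
|\langle T_i\mathbf{x},\mathbf{x}\rangle|\le\omega(A_i)\|x_1\|^2+\omega(D_i)\|x_2\|^2+(\|B_i\|+\|C_i\|)\|x_1\|\|x_2\|,
\]
an upper bound that depends on $\mathbf{x}$ only through the numbers $\|x_1\|$ and $\|x_2\|$.

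Next, raising to the $p$-th power, summing over $i$, taking the supremum over unit $\mathbf{x}$, and using subadditivity of the supremum ($\sup\sum\le\sum\sup$) together with monotonicity of $t\mapsto t^{p}$, the problem reduces to computing, for each $i$,
\[
M_i:=\sup_{\|x_1\|^2+\|x_2\|^2=1}\Big(\omega(A_i)\|x_1\|^2+\omega(D_i)\|x_2\|^2+(\|B_i\|+\|C_i\|)\|x_1\|\|x_2\|\Big).
\]
Setting $\|x_1\|=\cos\theta$, $\|x_2\|=\sin\theta$ with $\theta\in[0,\pi/2]$ and applying the half-angle identities, the quantity to be maximized becomes
\[
\frac{\omega(A_i)+\omega(D_i)}{2}+\frac{\omega(A_i)-\omega(D_i)}{2}\cos 2\theta+\frac{\|B_i\|+\|C_i\|}{2}\sin 2\theta ,
\]
and the elementary identity $\max_{\psi}(a+b\cos\psi+c\sin\psi)=a+\sqrt{b^2+c^2}$ yields $M_i=\frac{1}{2}\left(\omega(A_i)+\omega(D_i)+\sqrt{(\omega(A_i)-\omega(D_i))^2+(\|B_i\|+\|C_i\|)^2}\right)$. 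Thus $\omega_p^p(T_1,\dots,T_n)\le\sum_{i=1}^n M_i^p$, which is precisely the claimed bound (the factor $2^{-p}$ being $(\tfrac{1}{2})^p$). The ``in particular'' assertion is the case $n=1$, since $\omega_p(T)=\omega(T)$ for a single operator.

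The operator-theoretic ingredients are all routine (definition of $\omega$, Cauchy--Schwarz, and the two elementary monotonicity/subadditivity facts just used). The one step that deserves a moment of care is the final trigonometric maximization: one must check that the optimal angle actually corresponds to a legitimate pair of nonnegative norms $(\|x_1\|,\|x_2\|)$, which works out because the coefficient $\tfrac{1}{2}(\|B_i\|+\|C_i\|)$ multiplying $\sin 2\theta$ is nonnegative, forcing the maximizing $\psi=2\theta$ into $[0,\pi]$. I expect this mild point to be the only possible obstacle.
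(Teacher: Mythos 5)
Your argument is correct and is essentially the paper's own route: the paper obtains this corollary by setting $C_i=D_i=0$ in Theorem \ref{th1}, whose proof is exactly your expansion of $\langle T_i\mathbf{x},\mathbf{x}\rangle$ into four terms, the bounds $|\langle A_ix_1,x_1\rangle|\le\omega(A_i)\|x_1\|^2$, $|\langle B_ix_2,x_1\rangle|+|\langle C_ix_1,x_2\rangle|\le(\|B_i\|+\|C_i\|)\|x_1\|\|x_2\|$, the step $\sup\sum\le\sum\sup$, and the same trigonometric maximization giving $\tfrac12\bigl(\omega(A_i)+\omega(D_i)+\sqrt{(\omega(A_i)-\omega(D_i))^2+(\|B_i\|+\|C_i\|)^2}\bigr)$. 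You only need to state explicitly that the claimed bound is the specialization $C_i=D_i=0$ of your $M_i$ (and note that for the inequality one only needs the constrained supremum to be at most the unconstrained maximum, so your attainment check, while fine, is not even required).
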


\begin{corollary}
Let
$T_{i}=\left[
\begin{array}{cc}
0 & B_{i} \\
C_{i} & 0%
\end{array}%
\right]\in{\mathbb B}({\mathscr H}_1\oplus{\mathscr H}_2)  \,(1\leq i\leq n)$ and $p\geq 1$. The
\begin{equation*}
\omega _{p}^{p}(T_{1},\ldots ,T_{n})\leq 2^{-p}\sum_{i=1}^{n}\left(
\left\Vert B_{i}\right\Vert +\left\Vert C_{i}\right\Vert \right) ^{p}.
\end{equation*}%
In particular, if $B\in \mathbb{B}(\mathscr{H}_{2},\mathscr{H}_{1})$ and $%
C\in \mathbb{B}(\mathscr{H}_{1},\mathscr{H}_{2})$, then%
\begin{equation*}
\omega \left( \left[
\begin{array}{cc}
0 & B \\
C & 0%
\end{array}%
\right] \right) \leq \frac{1}{2}\left( \left\Vert B\right\Vert
+\left\Vert C\right\Vert \right).
\end{equation*}
\end{corollary}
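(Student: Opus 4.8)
The plan is to obtain this corollary as a direct specialization of Theorem~\ref{th1}. First I would put $A_{i}=D_{i}=0$ for every $i$, so that the general operator matrix $\left[\begin{array}{cc}A_{i}&B_{i}\\ C_{i}&D_{i}\end{array}\right]$ of Theorem~\ref{th1} becomes exactly $T_{i}=\left[\begin{array}{cc}0&B_{i}\\ C_{i}&0\end{array}\right]$. Substituting these choices into the conclusion of Theorem~\ref{th1},
\begin{align*}
\omega_{p}^{p}(T_{1},\ldots,T_{n})\leq 2^{-p}\sum_{i=1}^{n}\left(\omega(A_{i})+\omega(D_{i})+\sqrt{\left(\omega(A_{i})-\omega(D_{i})\right)^{2}+\left(\|B_{i}\|+\|C_{i}\|\right)^{2}}\right)^{p},
\end{align*}
and using $\omega(0)=0$, the two numerical‑radius summands and the squared difference under the radical all vanish, so each summand collapses to $\sqrt{\left(\|B_{i}\|+\|C_{i}\|\right)^{2}}=\|B_{i}\|+\|C_{i}\|$. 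This gives the asserted bound $\omega_{p}^{p}(T_{1},\ldots,T_{n})\leq 2^{-p}\sum_{i=1}^{n}\left(\|B_{i}\|+\|C_{i}\|\right)^{p}$ immediately.

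For the particular case I would take $n=1$ and $p=1$, recalling that $\omega_{1}(\cdot)=\omega(\cdot)$; the displayed inequality then reduces to $\omega\!\left(\left[\begin{array}{cc}0&B\\ C&0\end{array}\right]\right)\leq\tfrac{1}{2}\left(\|B\|+\|C\|\right)$, which is the second assertion.

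There is essentially no obstacle here: the statement is a pure specialization, and the only things to verify are that the substitution $A_{i}=D_{i}=0$ is legitimate (nothing in Theorem~\ref{th1} requires the diagonal blocks to be nonzero) and that $\sqrt{\left(\|B_{i}\|+\|C_{i}\|\right)^{2}}=\|B_{i}\|+\|C_{i}\|$ because norms are nonnegative. If one prefers a self-contained argument, it can also be done in one line: for a unit vector $\mathbf{x}=\left[\begin{array}{c}x_{1}\\ x_{2}\end{array}\right]$ one has $|\langle T_{i}\mathbf{x},\mathbf{x}\rangle|=|\langle B_{i}x_{2},x_{1}\rangle+\langle C_{i}x_{1},x_{2}\rangle|\leq\left(\|B_{i}\|+\|C_{i}\|\right)\|x_{1}\|\,\|x_{2}\|\leq\tfrac{1}{2}\left(\|B_{i}\|+\|C_{i}\|\right)$ by the Cauchy--Schwarz and arithmetic--geometric mean inequalities (since $\|x_{1}\|^{2}+\|x_{2}\|^{2}=1$); raising to the $p$‑th power, summing over $i$, and taking the supremum over unit vectors yields the claim without invoking Theorem~\ref{th1}.
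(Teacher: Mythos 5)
Your proposal is correct and is exactly the paper's route: the corollary is stated as the specialization $A_{i}=D_{i}=0$ of Theorem~\ref{th1}, and your substitution (with $\omega(0)=0$, leaving $\sqrt{\left(\|B_{i}\|+\|C_{i}\|\right)^{2}}=\|B_{i}\|+\|C_{i}\|$) together with the case $n=1$, $p=1$ for the particular statement is precisely what the paper intends. The one-line direct argument you add is a fine independent check but not needed.
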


\textbf{Acknowledgement.} The first author would like to thank the Tusi Mathematical Research Group (TMRG).
\bigskip
%===================================================================================================================================
\bibliographystyle{amsplain}

\end{document}